\newtheorem{Theorem}{Theorem}[section]
\newtheorem{Definition}{Definition}[section]
\newtheorem{Proposition}{Proposition}[section]
\newtheorem{Corollary}{Corollary}[section]
\newtheorem{Lemma}{Lemma}[section]
\newtheorem{Remark}{Remark}[section]
\newtheorem*{specialAssumption}{Assumption}
\numberwithin{equation}{section}
\renewenvironment{proof}[1][\proofname] {\par\pushQED{\qed}\normalfont\topsep6\p@\@plus6\p@\relax\trivlist\item[\hskip\labelsep\itshape\bfseries#1\@addpunct{.}]\ignorespaces}{\popQED\endtrivlist\@endpefalse}
\def \N{\mathbb{N}}
\def \R{\mathbb{R}}
\def \E{\mathbb{E}}
\def \F{\mathbb{F}}
\def \P{\mathbb{P}}
\def \S{\mathbb{S}}
\def \Cc{{\cal C}}
\def \Fc{{\cal F}}
\def \Gc{{\cal G}}
\def \Hc{{\cal H}}
\def \Lc{{\cal L}}
\def \Pc{{\cal P}}
\def \Sc{{\cal S}}
\def \Tc{{\cal T}}
\def \Wc{{\cal W}}
\DeclareMathOperator*{\esssup}{ess\,sup}
\newcommand{\Norm}[1]{ \lVert {#1} \rVert}
\title{Mean field optimal stopping with uncontrolled state}
\author{
Andrea COSSO\footnote{Universit\`a degli Studi di Milano; andrea.cosso@unimi.it} \quad\qquad
Laura PERELLI\footnote{Universit\`a degli Studi di Milano; laura.perelli@unimi.it\vspace{2mm}\\\textbf{Acknowledgements.} A. Cosso acknowledges support from GNAMPA-INdAM (of which L. Perelli is also member), the MUR project PRIN 2022 ``Entropy martingale optimal transport and McKean-Vlasov equations'', and the MUR project PRIN 2022 PNRR ``Probabilistic methods for energy transition''.}}
\date{March 6, 2025}
\begin{document}
\maketitle


\begin{abstract}
\noindent We study a specific class of finite-horizon mean field optimal stopping problems by means of the dynamic programming approach. In particular, we consider problems where the state process is not affected by the stopping time. Such problems arise, for instance, in the pricing of American options when the underlying asset follows a McKean-Vlasov dynamics. Due to the time inconsistency of these problems, we provide a suitable reformulation of the original problem for which a dynamic programming principle can be established. To accomplish this, we first enlarge the state space and then introduce the so-called extended value function. We prove that the Snell envelope of the original problem can be written in terms of the extended value function, from which we can derive a characterization of the smallest optimal stopping time. On the enlarged space, we restore time-consistency and in particular establish a dynamic programming principle for the extended value function. Finally, by employing the notion of Lions measure derivative, we derive the associated Hamilton-Jacobi-Bellman equation, which turns out to be a second-order variational inequality on the product space $[0,T]\times\mathbb R^d\times\mathcal P_2(\mathbb R^d)$; under suitable assumptions, we prove that the extended value function is a viscosity solution to this equation.
\end{abstract}

\vspace{5mm}

\noindent {\bf Keywords:} mean field optimal stopping problem; McKean-Vlasov stochastic differential equation; dynamic programming; viscosity solution.

\vspace{5mm}

\noindent {\bf Mathematics Subject Classification (2020):} 60G40, 49N80, 49L20, 49J40.

\section{Introduction}

In the present article, we study a class of mean field optimal stopping problems, in which the state process evolves according to the following McKean-Vlasov stochastic differential equation:
\begin{equation}\label{SDE_Intro}
\begin{cases}
        dX_s = b(s,X_s,\P_{X_s})ds + \sigma(s,X_s,\P_{X_s})dW_s, &\quad s\in[t,T],\\
        X_t = \xi.
\end{cases}
\end{equation}
Here $\P_{X_s}$ denotes the law of $X_s$, $W$ is an $m$-dimensional Brownian motion on a complete probability space $(\Omega,\Fc,\P)$, $\xi$ is a square integrable random variable taking values in $\R^d$, and the coefficients $b$ and $\sigma$ satisfy standard Lipschitz and linear growth conditions (see Assumption (\nameref{A1}); the precise setting will be described in the first paragraph of Section \ref{S:OptStoppProblem}). The underlying filtration $\F=(\Fc_t)_{t\in[0,T]}$ is the $\P$-completion of the filtration generated by $W$ and an independent $\sigma$-algebra $\Gc$; the presence of $\Gc$ is relevant at time $t=0$ to ensure that $\Fc_0$ is not trivial, and consequently that the initial condition $\xi$ at time $t=0$ is not necessarily a constant, but can have any finite second-order moment distribution. Denoting by $X^{t,\xi}=(X_s^{t,\xi})_{s\in[t,T]}$ the solution to equation \ref{SDE_Intro} (we refer to Section \ref{S:OptStoppProblem} for the study of equation \eqref{SDE_Intro} and properties of its solution), the goal is to maximize the following reward functional with respect to the family $\Tc_{t,T}$ of all $\F$-stopping times that take values in the interval $[t,T]$:
\[
    J(t,\xi,\tau) \ = \ \E\bigg[\int_{t}^{\tau} f\big(s,X^{t,\xi}_s,\P_{X^{t,\xi}_s}\big)ds \ + \ g\big(X^{t,\xi}_\tau\big)\bigg].
\]
The value function is then given by
\[
    V(t,\xi) \ = \ \sup_{\tau\in\Tc_{t,T}} J(t,\xi,\tau).
\]
Our aim is to address the above optimal stopping problem using the dynamic programming method, and also to obtain an explicit description of the Snell envelope, from which one can directly deduce the optimal stopping rule. The main difficulty of such a problem is its time-inconsistency, meaning that $V$ does not satisfy a dynamic programming principle, and therefore does not solve a Hamilton-Jacobi-Bellman equation. In order to address this issue, inspired by \cite[Section 6.5]{CD18_I}, where the classical mean field control problem is studied, in Section \ref{reformulation} we formulate the mean field optimal stopping problem on an enlarged space. More precisely, we introduce a new state variable, solving the following stochastic differential equation:
\begin{equation}\label{SDE_Intro_Reform}
\begin{cases}
        dX_s = b(s,X_s,\P^{t,\mu}_s)ds + \sigma(s,X_s,\P^{t,\mu}_s)dW_s, & s\in[t,T],\\
        X_t = x\in\R^d,
    \end{cases}
\end{equation}
where $\mu$ and $\P_s^{t,\mu}$ are respectively the law of $\xi$ and $X_s^{t,\xi}$, which are the initial condition and the solution to \eqref{SDE_Intro}. Note that equation \eqref{SDE_Intro_Reform} is standard, meaning that the coefficients do not depend on the marginal laws of the solution, but on the marginal laws of the known process $X^{t,\xi}$. Denoting by $X^{t,x,\mu}=(X_s^{t,x,\mu})_{s\in[t,T]}$ the solution to equation \eqref{SDE_Intro_Reform}, the new state process is the pair $(X_s^{t,x,\mu},\P_s^{t,\mu})_{s\in[t,T]}$. The presence of the additional process $X^{t,x,\mu}$ allows us to decouple the position of the state process and its law, which are the two relevant variables in the original problem. We can then formulate the mean field optimal stopping problem on such an enlarged state space. We refer to this new problem as the extended mean field optimal stopping problem. More precisely, we denote by $\tilde\Tc_{t,T}$ the family of $[t,T]$-valued stopping times with respect to the $\P$-completion of the filtration generated by the Brownian increments $(W_s-W_t)_{s\in[t,T]}$. We then consider the reward functional
\[
\tilde J(t,x,\mu,\tau) \ = \ \E\bigg[\int_{t}^{\tau} f(s,X^{t,x,\mu}_s,\P^{t,\mu}_s)ds \ + \ g(X^{t,x,\mu}_\tau)\bigg]
\]
and the so-called extended value function
\[
            \tilde V(t,x,\mu) \ = \ \sup_{\tau\in\tilde\Tc_{t,T}} \tilde J(t,x,\mu,\tau).
\]
Since $\Fc^{t,W}_s\subset\Fc_s$, for every $s\in[t,T]$, it follows that $\tilde\Tc_{t,T}\subset\Tc_{t,T}$. Notice that formulating the extended problem over $\tilde\Tc_{t,T}$ is not only important for achieving a fundamental result (Theorem \ref{T:representation V tilde}; see also Remark \ref{R: why Ttilde}), but it is also consistent with the reformulation of our problem; indeed, while in the original problem the evolution of the state process $X^{t,\xi}$ depends on two independent random components, namely $\xi$ and $(W_s-W_t)_{s\in[t,T]}$, in the extended problem the only source of noise is provided by the Brownian increments. Despite this, a posteriori, we are able to show that we obtain the same $\tilde V$ even when taking the supremum over $\Tc_{t,T}$, see Corollary \ref{C: enlarge stopping times for Vtilde}.
\noindent The first result in this paper, which turns out to be crucial, establishes that $\Tilde{V}$ satisfies the following relation (Theorem \ref{T:representation V tilde}):
\[
\tilde V(t,\xi,\mu) \ = \ \esssup_{\tau\in\Tc_{t,T}} \E\bigg[\int_{t}^{\tau} f\big(s,X^{t,\xi}_s,\P_{X_s^{t,\xi}}\big)ds \ + \ g\big(X^{t,\xi}_\tau\big) \bigg|\Fc_t\bigg], \qquad \P\text{-a.s.}
\]
Notice that the set of stopping times considered is the largest one. Based on this representation result, we derive two fundamental consequences. First, from the characterization of the Snell envelope, we deduce that the smallest optimal stopping time $\hat{\tau}^{t,\xi}$ of the original problem is given by (Theorem \ref{T:Snell})
\[
    \hat{\tau}^{t,\xi} \ = \ \inf\big\{s\in[t,T] \ \big| \ \Tilde{V}\big(s,X^{t,\xi}_s,\P_{X_s^{t,\xi}}\big) \, = \, g\big(X^{t,\xi}_s\big)\big\}.
\]
On the other hand, we show that the extended mean field optimal stopping problem is indeed time-consistent. In fact, we are able to prove that $\Tilde{V}$ satisfies a dynamic programming principle (Theorem \ref{T:DPP MKV 1}) and, furthermore, that it solves the Hamilton-Jacobi-Bellman equation \eqref{HJB MKV} on $[0,T]\times\R^d\times\mathcal{P}_2(\R^d)$  in the viscosity sense (Theorem \ref{T: V tilde solution HJB}), where $\Pc_2(\R^d)$ is the Wasserstein space of probability measures on $\R^d$ with finite second moment. This equation involves the so-called $L$-derivatives or Lions derivatives, see for instance \cite[Chapter 5]{CD18_I}. It is worth noting that a particularly relevant research topic is the uniqueness of viscosity solutions for second-order equations on the Wasserstein space; however, for equation \eqref{HJB MKV}, which as expected turns out to be a variational inequality on $[0,T]\times\R^d\times\Pc_2(\R^d)$, such an uniqueness result is still missing. Another important consequence of Theorem \ref{T:representation V tilde} is a ``disintegration'' result, which shows that the original value function $V$ can be written in terms of $\Tilde{V}$ (Corollary \ref{C:relation V - V tilde}):
\[
    V(t,\xi) \ = \ \E\big[\tilde V(t,\xi,\mu)\big] \ = \ \int_{\R^d}\tilde V(t,x,\mu)\,\mu(dx),
\]
with $\xi$ having distribution $\mu$. This result, along with the formula for the optimal stopping time $\hat{\tau}^{t,\xi}$, highlights a strong connection between the original and extended problems. Indeed, by solving the extended problem (which, as explained above, can be studied by the dynamic programming method and characterized in terms of a suitable dynamic programming equation), we can obtain both the value function and the optimal stopping time for the original problem.\\
\\
The literature on mean field optimal stopping problems has still few contributions, all very recent. We mention in particular \cite{TTZ1}, along with its companion papers \cite{TTZ2,TTZ3}, in which a different mean field optimal stopping problem is studied, characterized by a ``controlled'' state process (in contrast to the present work, where, as specified in the title, the state process is uncontrolled). More precisely, in \cite{TTZ1} the stopped state process is considered:
\begin{equation}\label{SDE_TTZ}
X_s \ = \ \xi + \int_t^{s\wedge\tau} b(r,X_r,\P_{X_r})dr + \int_t^{s\wedge\tau}\sigma(r,X_r,\P_{X_r})dW_r, \qquad s\in[t,T].
\end{equation}
It is easy to see that \eqref{SDE_Intro} and \eqref{SDE_TTZ} generally have different solutions (notice, in fact, that the solution to \eqref{SDE_TTZ} is always constant after $\tau$). To better understand the difference between \eqref{SDE_Intro} and \eqref{SDE_TTZ}, it is useful to think about the relationship between mean field control problems and stochastic differential games with many symmetrically interacting players (for a detailed introduction to this topic and further details, see for instance \cite{Lions,Carda13,CD18_I}). More precisely, mean field control problems, as well as mean field games, can be thought as limiting problems, when the number of players goes to infinity, of stochastic differential games where the interaction between the various state processes happens through the empirical distribution. The mean field game corresponds to the case where Nash equilibrium is considered in the game, while mean field control problem is obtained by considering the Pareto equilibrium; the latter is also referred to as the social planner problem. The mean field optimal stopping problem considered in the present article, as well as the one considered in \cite{TTZ1}, are mean field control problems, so both refer to the social planner problem. If in the empirical distribution the stopped state processes are considered, then in the limit, when the number of players tends to infinity, the problem studied in \cite{TTZ1} with state equation \eqref{SDE_TTZ} is obtained, as shown rigorously in \cite{TTZ3}. On the other hand, although the convergence issue is beyond the scope of this work, it is reasonable to expect that equation \eqref{SDE_Intro} corresponds to the case where the empirical distribution is made of unstopped state processes, that is, it models a situation where the social planner 
cannot act on a state in order to stop its evolution. Notice however that the mean field optimal stopping problem with uncontrolled state studied in the present paper, with state process \eqref{SDE_Intro}, is relevant not only because of its potential connection with the social planner problem, but also whenever it is useful to consider an optimal stopping problem in which the state dynamics is of McKean-Vlasov type and the state process cannot be stopped by the controller. This is the case, for instance, in the pricing of American options with an underlying asset following a McKean-Vlasov dynamics. As suggested in \cite{YY}, an example of underlying asset whose behavior can be adequately described by a McKean-Vlasov type dynamics could be an ETF (Exchange Traded Fund), as it is determined by a basket of stocks, those that make up the corresponding index. For further details, see \cite[Section 5.2]{YY}, where the authors derive the dynamics of an ETF in a discrete-time model, starting from the price dynamics of each component stock of the corresponding index, and then taking the limit as the number of stocks goes to infinity. As noticed in \cite{YY}, American options on ETFs are financial instruments particularly useful for hedging systemic risk, such as the downward risk of an index; furthermore, the trading volume of ETFs, as well as of their options, has dramatically grown in the last decades. In the literature, other works studying mean field optimal stopping problems are \cite{AO}, \cite{BS20}, \cite{DM}. In all of them, as in the present article, an uncontrolled state process is considered. In particular, \cite{DM} studies a specific class of problems where the dependence on the law is through the expected value, and also addresses the case of recursive utility; in \cite{DM} such a problem is solved by approximating the corresponding value process with a sequence of Snell envelopes, therefore obtaining an explicit characterization of the optimal stopping rule. In \cite{AO}, optimal stopping problems for conditional McKean-Vlasov jump diffusions are studied, providing a verification theorem and applying it to the problem of finding the optimal time to sell in a market with common noise and jumps. Finally, \cite{BS20} investigates numerical methods for mean field optimal stopping problems, for which we also refer to \cite{YY}, where optimal stopping for mean field Markov decision processes is studied. We further mention \cite{BEH18,DEH23}, where a topic closely related to mean field optimal stopping problems is investigated, namely mean field backward stochastic differential equations. Finally, we mention the papers \cite{Bertucci,BouveretDumitrescuTankov,CarmonaDelarueLacker,DumitrescuLeutscherTankov,Nutz,WangZhou} which are devoted to the mean field game of optimal stopping.\\
\\
\noindent The rest of the paper is organized as follows. In Section \ref{S:OptStoppProblem} we formulate the mean field optimal stopping problem, stating the assumptions on the coefficients, and discussing some properties of the state process and the value function. Section \ref{reformulation} is devoted to the extended mean field optimal stopping problem; in particular, we introduce the new state process and the extended value function; we also prove the fundamental Theorem \ref{T:representation V tilde}, as well as its two direct consequences, namely Corollaries \ref{C: enlarge stopping times for Vtilde} and \ref{C:relation V - V tilde}. In Section \ref{S:Snell}, we show that the Snell envelope of the original problem can be expressed through the extended value function $\tilde V$; from this result, we derive a characterization of the smallest optimal stopping rule in terms of $\tilde V$. Section \ref{S:DPP} is devoted to the proof of the time-consistency for the extended problem, namely the dynamic programming principle for $\tilde V$. Finally, in Section \ref{S:HJB}, we characterize the extended value function in terms of the Hamilton-Jacobi-Bellman equation \eqref{HJB MKV}, which turns out to be a variational inequality on $[0,T]\times\R^d\times\Pc_2(\R^d)$; we conclude Section \ref{S:HJB} by proving that $\tilde V$ is a viscosity solution of this equation, Theorem \ref{T: V tilde solution HJB}.

\section{The mean field optimal stopping problem}\label{S:OptStoppProblem}
\textbf{Probabilistic framework.} We fix a time horizon $T\in(0,+\infty)$ and a complete probability space $(\Omega,\Fc,\P)$ on which an $m$-dimensional Brownian motion $W = (W^1_t,\dots,W^m_t)_{t\in[0,T]}$ is defined. We equip the probability space with the filtration $\F = (\Fc_t)_{t\in[0,T]}$ defined by $\Fc_t:=\Fc^W_t\vee\Gc$, $t\in[0,T]$, where $\F^W=(\Fc^W_t)_{t\in[0,T]}$ is the $\P$-completion of the natural filtration of $W$, while $\Gc$ is a sub-$\sigma$-algebra of $\Fc$ independent of $\Fc^W_T$ and such that there exists a $\Gc$-measurable random variable $U$ having uniform distribution on $[0,1]$ (see \cite[Lemma 2.1]{CossoMartini} for equivalent conditions to the latter property). Notice that $\F$ satisfies the usual conditions. Moreover, given $t\in[0,T]$ fixed, we denote by:
\begin{itemize}
    \item $\Tc_{t,T}$ the set of $\F$-stopping times with values in $[t,T]$;
    \item $\S^2_{t,T}$ the set of continuous and $\F$-progressive processes $X:[t,T]\times\Omega\longrightarrow\R^d$ such that 
    \[
        \|X\|_{ \S^2_{t,T}}:=\E\bigg[\sup_{t\leq s\leq T}|X_s|^2\bigg]^{1/2}<+\infty;
    \]
    \item $(\mathcal{P}_2(\R^d),\mathcal{W}_2)$ the second-order Wasserstein space, that is the set of all probability measures on the Borel subsets of $\R^d$ with finite second moment, endowed with the 2-Wasserstein distance $\Wc_2$. We recall that, for every $\mu,\nu\in\mathcal{P}_2(\R^d)$,
    \begin{align*}
        \Wc_2^2(\mu,\nu) \coloneqq \inf\bigg\{\int_{\R^d\times\R^d}|x-y|^2\pi(dx,dy) \ \Big| \ \pi\in\mathcal{P}(&\R^d\times\R^d)\text{ such that}\\ &\pi(\,\cdot\times\R^d) = \mu,\, \pi(\R^d\times\cdot\,) = \nu\bigg\}.
    \end{align*}
    We also denote
    \begin{equation}\label{norm_2}
         \Norm{\mu}_2 \ \coloneqq \ \Wc_2(\mu,\delta_0) \ = \ \bigg(\int_{\R^d} |x|^2\mu(dx)\bigg)^{\frac{1}{2}};
    \end{equation}
    \item $L^2(\Fc_t;\R^d)$ the set of square-integrable, $\Fc_t$-measurable random variables taking values in $\R^d$. In particular, for every $\xi\in L^2(\Fc_t;\R^d)$ we denote its $L^2$-norm by $\Norm{\xi}_{L^2}$.
\end{itemize}
Now consider the measurable functions $b,\sigma,f\colon[0,T]\times\R^d\times\mathcal{P}_2(\R^d)\rightarrow\R^d,\R^{d\times m},\R$ and $g\colon\R^d\rightarrow\R$. Throughout the paper the following assumptions will always be in force.
\begin{specialAssumption}[\textbf{A$_{b,\sigma}$}]\label{A1}
\quad\begin{enumerate}[\upshape1)]
    \item $b,\sigma$ are Lipschitz continuous in $(x,\mu)$ uniformly in $t$: there exists $L>0$ such that, for all  $t\in[0,T],\, x,y\in\R^d,\, \mu,\nu\in\mathcal{P}_2(\R^d)$,
    \[
        |b(t,x,\mu)-b(t,y,\nu)| \ + \ |\sigma(t,x,\mu)-\sigma(t,y,\nu)| \ \leq \ L(|x-y| \ + \ \Wc_2(\mu,\nu));
    \]
    \item there exists $K>0$ such that, for all $t\in[0,T]$,
    \[
        |b(t,0,\delta_0)| \ + \ |\sigma(t,0,\delta_0)| \ \leq \ K.
    \]
\end{enumerate}
\end{specialAssumption}

\quad

\begin{specialAssumption}[\textbf{A$_{f,g}$}]\label{A2}
\quad\begin{enumerate}[\upshape1)]
    \item $f$ is locally uniformly continuous in $(x,\mu)$ uniformly in $t$: for every $\varepsilon>0$ and $n\in\N$, there exists $\delta = \delta(\varepsilon,n)>0$ such that, for every $t\in[0,T],\, (x,\mu),(y,\nu)\in\R^d\times\mathcal{P}_2(\R^d)$ with $|x|+\Norm{\mu}_2
    \leq n$, $|y|+\Norm{\nu}_2\leq n$
    \[
        |x-y| \ + \ \Wc_2(\mu,\nu) \ \leq \ \delta \quad \Longrightarrow \quad |f(t,x,\mu) \ - \ f(t,y,\nu)| \ \leq \ \varepsilon;
    \]
    \item $g$ is continuous;   
    \item $f$ has sub-quadratic growth in $(x,\mu)$ uniformly in $t$ and $g$ has sub-quadratic growth in $x$: there exists $K>0$ such that, for every $t\in[0,T]$, $x\in\R^d$, $\mu\in\mathcal{P}_2(\R^d)$,
    \[
        |f(t,x,\mu)| \ \leq \ K(1 + |x|^2 + \Norm{\mu}_2^2), \qquad |g(x)| \ \leq \ K(1 + |x|^2).
    \]
    \end{enumerate}
\end{specialAssumption}
\begin{Remark}\label{R: regularity assumptions}
    \textnormal{Assumptions (\nameref{A1})-(\nameref{A2})} are as in \cite{cosso1}, except for $g$ that here does not depend on $\mu$. This is crucial for instance at the end of Step 1.1 in the proof of Theorem \ref{T:representation V tilde}.
\end{Remark}

\noindent\textbf{State process and its properties.} For every $t\in[0,T]$ and $\xi\in L^2(\Fc_t;\R^d)$, the state process evolves according to the following McKean-Vlasov stochastic differential equation:
\begin{equation}\label{MKV SDE}
    \begin{cases}
        dX_s = b(s,X_s,\P_{X_s})ds + \sigma(s,X_s,\P_{X_s})dW_s, & s\in[t,T],\\
        X_t = \xi.
    \end{cases}
\end{equation}
We observe that, due to the definition of $\F$, $\xi$ is always independent of the Brownian motion's increments, which ensures that the stochastic integral is well-defined. Moreover, thanks to the choice of $\Gc$, the law of $\xi$ can be any element of $\mathcal{P}_2(\R^d)$, as it follows for instance from \cite[Lemma 2.1]{CossoMartini}.\\ 
Under Assumption (\nameref{A1}) we have existence and uniqueness of  the state process for any choice of initial data; in fact, the following proposition holds.
\begin{Proposition}\label{T:ex-uniq MKV}
    For every $t\in[0,T]$ and $\xi\in L^2(\Fc_t;\R^d)$, there exists a unique (up to $\P$-indistinguishability) process $X^{t,\xi}\in\S^2_{t,T}$ solution to \eqref{MKV SDE} that satisfies the following estimate: there exists a constant $C>0$ such that
    \[
        \Norm{X^{t,\xi}}_{\S^2_{t,T}}^2 \ \leq \ C(1 + \Norm{\xi}_{L^2}^2). 
    \]
\end{Proposition}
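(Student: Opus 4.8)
The plan is to establish this by a Picard fixed-point argument carried out directly at the level of the process, the key point being that the McKean--Vlasov (law) dependence can be controlled through an $L^2$-coupling bound. First I would record the linear growth consequence of Assumption (\nameref{A1}): splitting $b(s,x,\mu) = b(s,0,\delta_0) + \big(b(s,x,\mu)-b(s,0,\delta_0)\big)$, the Lipschitz estimate together with part 2) and the identity $\Wc_2(\mu,\delta_0)=\Norm{\mu}_2$ from \eqref{norm_2} give
\[
|b(s,x,\mu)| \ + \ |\sigma(s,x,\mu)| \ \leq \ C\big(1 + |x| + \Norm{\mu}_2\big)
\]
for a constant $C$ depending only on $K,L$. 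This growth bound is what keeps the Picard iterates in $\S^2_{t,T}$.

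I would then define the iteration $X^0_s\equiv\xi$ and, for $n\geq 0$,
\[
X^{n+1}_s \ = \ \xi \ + \ \int_t^s b\big(r,X^n_r,\P_{X^n_r}\big)\,dr \ + \ \int_t^s \sigma\big(r,X^n_r,\P_{X^n_r}\big)\,dW_r, \qquad s\in[t,T],
\]
where each integrand is well defined because $\xi$ is $\Fc_t$-measurable, hence independent of the Brownian increments, and the iterates are $\F$-progressive. A first induction, using the Burkholder--Davis--Gundy (BDG) inequality, the linear growth bound, the identity $\Norm{\P_{X^n_r}}_2^2 = \E|X^n_r|^2$, and Gr\"onwall's lemma, shows that every $X^n$ lies in $\S^2_{t,T}$ with a bound uniform in $n$ of the form $C(1+\Norm{\xi}_{L^2}^2)$.

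The crux, and the step I expect to be the main obstacle, is the contraction estimate, where the measure dependence must be closed. The essential tool here is the coupling inequality $\Wc_2(\P_X,\P_Y) \leq \Norm{X-Y}_{L^2}$, valid for all $X,Y\in L^2$, which converts the Wasserstein distance between the laws of consecutive iterates into an $L^2$-distance between the iterates themselves. Combining this with BDG and the Lipschitz assumption yields, for every $u\in[t,T]$,
\[
\E\Big[\sup_{t\leq s\leq u}|X^{n+1}_s - X^n_s|^2\Big] \ \leq \ C\int_t^u \E\Big[\sup_{t\leq s\leq r}|X^n_s - X^{n-1}_s|^2\Big]\,dr.
\]
Iterating this inequality produces the factorial decay $\frac{(C(u-t))^n}{n!}$, which is summable; hence $(X^n)_n$ is Cauchy in the Banach space $\S^2_{t,T}$ and converges to some $X^{t,\xi}\in\S^2_{t,T}$.

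Finally I would pass to the limit in the iteration: by Lipschitz continuity and the same coupling bound (so that $\Wc_2(\P_{X^n_r},\P_{X^{t,\xi}_r})\to 0$), the drift and diffusion terms converge in $L^2$, and $X^{t,\xi}$ solves \eqref{MKV SDE}. Uniqueness follows from the same template: subtracting two solutions and applying BDG, the Lipschitz bound, the coupling inequality, and Gr\"onwall's lemma forces their $\S^2_{t,T}$-distance to vanish. The a priori estimate is obtained by applying BDG and the linear growth bound to the solution itself and invoking Gr\"onwall, once again using $\Norm{\P_{X^{t,\xi}_r}}_2^2 = \E|X^{t,\xi}_r|^2 \leq \E[\sup_{t\leq s\leq r}|X^{t,\xi}_s|^2]$ to absorb the law-dependent term.
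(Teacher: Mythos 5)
Your proposal is correct and complete. The paper does not prove this proposition itself but delegates it to \cite[Theorem 4.21 and Section 5.7.4]{CD18_I}; your argument is the standard one behind that reference, so there is no substantive divergence. The one organizational difference worth noting is that the cited proof is usually phrased as a fixed point on the space of continuous flows of measures $C([t,T];\mathcal{P}_2(\R^d))$ (freeze a candidate flow $\mu=(\mu_s)_s$, solve the resulting standard SDE, and show that the map sending $\mu$ to the marginal laws of that solution is a contraction, possibly after iterating it), whereas you run the Picard iteration directly at the level of the process and convert the law dependence into an $L^2$-distance via the coupling bound $\Wc_2(\P_X,\P_Y)\leq\Norm{X-Y}_{L^2}$. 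The two routes are equivalent here and rest on exactly the same coupling inequality; yours is more self-contained and, as a bonus, is precisely the iteration scheme the paper itself writes down and reuses in the proof of Proposition \ref{P: same marginal}, so it fits the surrounding text well. All the auxiliary points you flag (linear growth from (\nameref{A1}), progressive measurability of the iterates, continuity of $s\mapsto\P_{X^n_s}$ in $\Wc_2$ needed for the integrands to make sense, factorial decay, and absorption of the law term in the Gr\"onwall step for the a priori estimate) are handled correctly.
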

\begin{proof}
See \cite[Theorem 4.21 and Section 5.7.4]{CD18_I}.
\end{proof}
\begin{Remark}
    Since the state process $X^{t,\xi}$ is in $\S_{t,T}^2$, the flow of its marginal distributions is a continuous flow of probability measures in $\mathcal{P}_2(\R^d)$, that is the map $(\P_{X^{t,\xi}_s})_{s\in[t,T]}\colon[t,T]\rightarrow\mathcal{P}_2(\R^d)$ is continuous. 
\end{Remark}

\noindent We now prove that the marginals of the state process are invariant with respect to the initial law (see Remark \ref{R: same marginals}). This invariance property will be crucial in reformulating the problem in Section \ref{reformulation}.
\begin{Proposition}\label{P: same marginal}
    Let $t\in[0,T]$ and $\xi,\eta\in L^2(\Fc_t;\R^d)$ with $\P_\xi = \P_\eta = \mu$. Then the respective state processes $X^{t,\xi}$, $X^{t,\eta}$ have the same marginal distributions, i.e.
    \[
        \P_{X^{t,\xi}_s} \ = \ \P_{X^{t,\eta}_s},\qquad\text{for all } s\in[t,T].
    \]
\end{Proposition}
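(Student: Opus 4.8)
The plan is to exploit the fact that the McKean--Vlasov dynamics \eqref{MKV SDE} decouples once the flow of marginal laws is frozen, together with the elementary principle that the law of the solution to a \emph{standard} SDE depends on the initial datum only through its distribution.

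First I would freeze the measure argument along the known solution: set $m_s := \P_{X^{t,\xi}_s}$ for $s\in[t,T]$, which by Proposition \ref{T:ex-uniq MKV} (and the subsequent remark) is a continuous, \emph{deterministic} flow in $\mathcal{P}_2(\R^d)$. By definition of a solution to \eqref{MKV SDE}, the process $X^{t,\xi}$ solves the standard SDE
\[
dX_s = b(s,X_s,m_s)\,ds + \sigma(s,X_s,m_s)\,dW_s,\qquad X_t=\xi,
\]
whose coefficients $(s,x)\mapsto b(s,x,m_s)$ and $(s,x)\mapsto \sigma(s,x,m_s)$ are Lipschitz in $x$ uniformly in $s$ by Assumption (\nameref{A1}), and of linear growth.

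Next I would introduce the process $Y$ as the solution to this \emph{same} standard SDE but started from $\eta$, i.e. with $Y_t=\eta$. The core step is to show that $\P_{Y_s}=\P_{X^{t,\xi}_s}=m_s$ for every $s\in[t,T]$. Since the frozen coefficients are Lipschitz, strong existence and pathwise uniqueness hold, so both processes admit the representation $X^{t,\xi}_s=\Phi_s\big(\xi,(W_r-W_t)_{t\le r\le s}\big)$ and $Y_s=\Phi_s\big(\eta,(W_r-W_t)_{t\le r\le s}\big)$ for one common measurable map $\Phi_s$ (obtained, for instance, as the almost sure limit of the Picard iteration, which enters $\xi$ or $\eta$ only as the initial input). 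Because $\xi$ and $\eta$ are each independent of the Brownian increments after $t$ (by the definition $\Fc_t=\Fc^W_t\vee\Gc$ with $\Gc$ independent of $\Fc^W_T$) and $\P_\xi=\P_\eta=\mu$, the pairs $(\xi,(W_r-W_t)_r)$ and $(\eta,(W_r-W_t)_r)$ have the same joint law; applying the measurable map $\Phi_s$ then gives $\P_{Y_s}=\P_{X^{t,\xi}_s}=m_s$. This is the step I expect to be the main (albeit routine) obstacle, as it rests on the functional representation of strong solutions and on the independence of the initial datum from the driving noise.

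Finally, since $\P_{Y_s}=m_s$ for all $s$, the frozen equation satisfied by $Y$ coincides with the genuine McKean--Vlasov equation: indeed $b(s,Y_s,m_s)=b(s,Y_s,\P_{Y_s})$ and likewise for $\sigma$. Hence $Y$ is a solution of \eqref{MKV SDE} with initial condition $\eta$, and by the uniqueness part of Proposition \ref{T:ex-uniq MKV} we conclude that $Y=X^{t,\eta}$ up to $\P$-indistinguishability. Therefore $\P_{X^{t,\eta}_s}=\P_{Y_s}=m_s=\P_{X^{t,\xi}_s}$ for every $s\in[t,T]$, which is the desired invariance.
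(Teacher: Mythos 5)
Your proof is correct, but it is organized differently from the paper's. The paper runs the McKean--Vlasov Picard iteration simultaneously for $\xi$ and $\eta$ and shows inductively that $(X^m_s,W_s-W_t)_s$ and $(\tilde X^m_s,W_s-W_t)_s$ have the same law at every stage $m$ (so that, in particular, the measure arguments $\P_{X^m_r}=\P_{\tilde X^m_r}$ feeding the next iterate agree), and then passes to the almost sure limit to transfer the equality of marginals to the solutions. You instead freeze the flow $m_s=\P_{X^{t,\xi}_s}$ once and for all, reduce to a single \emph{standard} SDE with Lipschitz coefficients, use the functional representation $\Phi_s\big(\cdot,(W_r-W_t)_r\big)$ of its strong solution together with the independence of the initial datum from the Brownian increments to get $\P_{Y_s}=m_s$, and then close the loop by observing that $Y$ thereby solves the genuine McKean--Vlasov equation from $\eta$, so $Y=X^{t,\eta}$ by the uniqueness in Proposition \ref{T:ex-uniq MKV}. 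Both arguments rest on the same elementary principle (the law of a strong solution of a standard SDE depends on the initial condition only through its law, given independence from the noise), but your version avoids the inductive law-matching across the two iteration schemes at the price of invoking McKean--Vlasov uniqueness explicitly; the paper's version stays entirely at the level of the approximating sequences and never needs to re-identify the limit as a McKean--Vlasov solution. Your identification step is the one worth writing out carefully (measurability of $(s,x)\mapsto b(s,x,m_s)$, and the fact that $\Phi_s$ is a single measurable map common to both initial data, e.g.\ obtained as the a.s.\ limit of the Picard iterates of the frozen equation), but it is routine and the argument is sound.
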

\begin{proof}
    Define the two following sequences of processes:
    \[
        \begin{cases}
            X^0_s := \xi, & s\in[t,T]\\
            X^{m+1}_s := \xi + \int_t^s b(r,X^m_r,\P_{X^m_r})dr + \int_t^s \sigma(r,X^m_r,\P_{X^m_r})dW_r, & s\in[t,T],\,m\geq0
        \end{cases}
    \]
    \[
        \begin{cases}
            \tilde X^0_s := \eta, & s\in[t,T]\\
            \tilde X^{m+1}_s := \eta + \int_t^s b(r,\tilde X^m_r,\P_{\tilde X^m_r})dr + \int_t^s \sigma(r,\tilde X^m_r,\P_{\tilde X^m_r})dW_r, & s\in[t,T],\,m\geq0
        \end{cases}
    \]\\
    Adapting to the present McKean-Vlasov framework the classical Picard's approximation method (see for instance \cite[Theorem 9.2]{Baldi}), it can be shown that
    \[
        \sup_{s\in[t,T]}|X^m_s - X^{t,\xi}_s|\ \overset{m\rightarrow +\infty}{\longrightarrow} \   0,\quad
        \sup_{s\in[t,T]}|\tilde X^m_s - X^{t,\eta}_s|\ \overset{m\rightarrow +\infty}{\longrightarrow}  \ 0\qquad \text{a.s.}
    \]
    So, in particular, for all $s\in[t,T]$
    \begin{equation}\label{weak convergence}
        \P_{X^m_s} \  \overset{d}{\longrightarrow}    \ \P_{X^{t,\xi}_s},\qquad \P_{\tilde X^m_s} \  \overset{d}{\longrightarrow}    \ \P_{X^{t,\eta}_s}\qquad\text{as } m\rightarrow+\infty,
    \end{equation}
    where $\ \overset{d}{\longrightarrow} \ $ stands for the weak convergence of distributions. By applying a similar argument to that used in the proof of Theorem 9.5 in \cite{Baldi}, we get that, for all $m\in\N$, $(X^m_s,W_s-W_t)_{s\in[t,T]}$ and $(\tilde X^m_s,W_s - W_t)_{s\in[t,T]}$ have the same law. Thus, for all $m$ and for all $s\in[t,T]$, $\P_{X^m_s}=\P_{\tilde X^{m}_s}$. This, together with \eqref{weak convergence}, directly implies that $\P_{X^{t,\xi}_s}=\P_{X^{t,\eta}_s}$, for all $s\in[t,T]$.
\end{proof}
\begin{Remark}\label{R: same marginals}
    Due to \textnormal{Proposition \ref{P: same marginal}}, given $t\in[0,T]$ and $\xi\in L^2(\Fc_t;\R^d)$ with $\P_\xi=\mu$, the marginal distributions of the corresponding state process $X^{t,\xi}$ only depend on the initial time $t$ and distribution $\mu$. Thus, for every $t\in[0,T],\, \mu\in\mathcal{P}_2(\R^d)$, we can define
    \[
        \P^{t,\mu}_s := \P_{X^{t,\xi}_s}\qquad\text{for all }s\in[t,T],
    \]
    where $\xi$ is any random variable in $L^2(\Fc_t;\R^d)$ with law $\P_\xi = \mu$.
\end{Remark}
\noindent \textbf{Reward functional and value function.} We define the reward functional $J$ and the value function $V$ for our optimal stopping problem as follows. Given $t\in[0,T],\, \xi\in L^2(\Fc_t;\R^d)$, and the corresponding state process $X^{t,\xi}$ (regarding the fact that $g$ does not depend on $\mu$, see Remark \ref{R: regularity assumptions}),
\[
    J(t,\xi,\tau) \ \coloneqq \ \E\bigg[\int_{t}^{\tau} f(s,X^{t,\xi}_s,\P_{X^{t,\xi}_s})ds \ + \ g(X^{t,\xi}_\tau)\bigg],\quad\text{for all }\tau\in\Tc_{t,T},
\]
\[
    V(t,\xi) \ \coloneqq \ \sup_{\tau\in\Tc_{t,T}} J(t,\xi,\tau).
\]
\noindent Thanks to Assumptions (\nameref{A1}) and (\nameref{A2}), both $J$ and $V$ are well-defined. 
\begin{Proposition}\label{P: V properties}
    The value function $V$ is continuous and has sub-quadratic growth in $\xi$ uniformly in $t$: there exists $C>0$ such that, for all $t\in[0,T],\,\xi\in L^2(\Fc_t;\R^d)$,
    \[
            |V(t,\xi)| \ \leq \ C(1+\|\xi\|_{L^2}^2).
    \]
\end{Proposition}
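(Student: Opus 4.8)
The plan is to prove the two assertions separately. The sub-quadratic growth estimate is essentially a direct consequence of the growth conditions in Assumption (\nameref{A2}) together with the a priori bound of Proposition \ref{T:ex-uniq MKV}, whereas continuity demands a stability analysis of the state process combined with the merely continuous (non-Lipschitz) dependence of the rewards on their arguments.

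For the growth bound, I would fix $t\in[0,T]$, $\xi\in L^2(\Fc_t;\R^d)$ and an arbitrary $\tau\in\Tc_{t,T}$. Using Assumption (\nameref{A2})(3) and the identity $\Norm{\P_{X^{t,\xi}_s}}_2^2=\E[|X^{t,\xi}_s|^2]$, one bounds $|J(t,\xi,\tau)|$ from above by $\E\big[\int_t^T|f(s,X^{t,\xi}_s,\P_{X^{t,\xi}_s})|\,ds+|g(X^{t,\xi}_\tau)|\big]$, and then by $C\big(1+\E[\sup_{t\le s\le T}|X^{t,\xi}_s|^2]\big)=C\big(1+\Norm{X^{t,\xi}}_{\S^2_{t,T}}^2\big)$ for a constant $C$ depending only on $K$ and $T$. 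Since this bound is uniform in $\tau$, taking the supremum over $\Tc_{t,T}$ and invoking $\Norm{X^{t,\xi}}_{\S^2_{t,T}}^2\le C(1+\Norm{\xi}_{L^2}^2)$ yields the stated estimate.

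For continuity, the elementary device is that, at a fixed initial time $t$, $|V(t,\xi)-V(t,\eta)|\le\sup_{\tau\in\Tc_{t,T}}|J(t,\xi,\tau)-J(t,\eta,\tau)|$, so it suffices to show that $J$ is continuous uniformly in $\tau$. First I would establish stability of the state process: if $\xi_n\to\xi$ in $L^2(\Fc_t;\R^d)$, then $X^{t,\xi_n}\to X^{t,\xi}$ in $\S^2_{t,T}$, and consequently $\sup_{s\in[t,T]}\Wc_2(\P_{X^{t,\xi_n}_s},\P_{X^{t,\xi}_s})\to0$; this is the usual Gronwall/Picard argument valid under Assumption (\nameref{A1}) (cf. \cite[Chapter 4]{CD18_I}). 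To pass to the limit in $J$, I would use that $L^2$-convergence of $\sup_s|X^{t,\xi_n}_s|$ renders $\{\sup_s|X^{t,\xi_n}_s|^2\}_n$ uniformly integrable; combining this with the local uniform continuity of $f$ and the continuity of $g$ (Assumption (\nameref{A2})(1)--(2)) in a truncation argument gives $\E\int_t^T|f(s,X^{t,\xi_n}_s,\P_{X^{t,\xi_n}_s})-f(s,X^{t,\xi}_s,\P_{X^{t,\xi}_s})|\,ds\to0$ and $\E\big[\sup_{s\in[t,T]}|g(X^{t,\xi_n}_s)-g(X^{t,\xi}_s)|\big]\to0$. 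The second convergence controls the terminal term uniformly in $\tau$, because $|g(X^{t,\xi_n}_\tau)-g(X^{t,\xi}_\tau)|\le\sup_{s}|g(X^{t,\xi_n}_s)-g(X^{t,\xi}_s)|$ for every $\tau\in\Tc_{t,T}$; this settles continuity in $\xi$.

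The hard part will be continuity in the time variable $t$, where both the domain $L^2(\Fc_t;\R^d)$ and the admissible family $\Tc_{t,T}$ vary with $t$, so the clean inequality above is no longer directly available. Here I would rely on the flow property of the state process and on $\S^2$-estimates comparing solutions started at nearby initial times, together with a reparametrisation mapping stopping times in $\Tc_{t',T}$ to stopping times in $\Tc_{t,T}$, in order to estimate $|J(t,\xi,\cdot)-J(t',\xi',\cdot)|$ uniformly; this is precisely the delicate point, and it can be carried out following the corresponding continuity result of \cite{cosso1}. Throughout, the non-Lipschitz nature of $f$ and $g$ is handled not by direct Lipschitz estimates but by the uniform-integrability/truncation scheme described above.
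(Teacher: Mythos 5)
Your proposal is correct and follows the same route as the paper, which simply defers to the proof of \cite[Proposition 3.3]{cosso1}: the growth bound via Assumption (\nameref{A2})(3) and the $\S^2$-estimate of Proposition \ref{T:ex-uniq MKV}, continuity in $\xi$ via $\S^2$-stability of the flow together with a uniform-integrability/truncation argument, and the delicate $t$-continuity handled as in \cite{cosso1}. Since your sketch reproduces (in more detail) exactly the argument the paper cites, including deferring the time variable to the same reference, there is nothing to add.
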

\begin{proof}
The proof can be done proceeding as in \cite[Proposition 3.3]{cosso1}.
\end{proof}
\section{Extended mean field optimal stopping problem}\label{reformulation}
We proceed as in \cite[Section 6.5]{CD18_I} and consider the so-called enlarged state space, on which we define the extended value function \eqref{V tilde}.

\vspace{2mm}

\noindent\textbf{New state process.} Let $t\in[0,T],\, x\in\R^d,\,\mu\in\mathcal{P}_2(\R^d)$ be fixed. By Remark \ref{R: same marginals}, there exists a unique flow of probability measures $(\P^{t,\mu}_s)_{s\in[t,T]}$ associated to $(t,\mu)$. Now we take the state equation \eqref{MKV SDE} of the original problem and replace the initial state $\xi$ with $x$ and the marginal $\P_{X_s}$ with $\P^{t,\mu}_s$. So we obtain 
\begin{equation}\label{new SDE}
    \begin{cases}
        dX_s = b(s,X_s,\P^{t,\mu}_s)ds + \sigma(s,X_s,\P^{t,\mu}_s)dW_s, & s\in[t,T],\\
        X_t = x.
    \end{cases}
\end{equation}
We note that, since $(\P^{t,\mu}_s)_{s\in[t,T]}$ is fixed, \eqref{new SDE} is no longer a McKean-Vlasov type stochastic differential equation. In particular, a classical result (see for instance \cite[Theorem 9.2]{Baldi}) states that, under Assumption (\nameref{A1}), equation \eqref{new SDE} admits a unique solution in $\S^2_{t,T}$, that we denote by $X^{x,t,\mu}$.
\begin{Definition}
        Let $t\in[0,T]$, $\mu\in\mathcal{P}_2(\R^d)$ and $x\in\R^d$. The state process of the extended problem with initial data $(t,x,\mu)$ is the pair $(X^{t,x,\mu}_s,\P^{t,\mu}_s)_{s\in[t,T]}$.
\end{Definition}
\begin{Proposition}
    For every $(t,x,\mu)\in[0,T]\times\R^d\times\mathcal{P}_2(\R^d)$, and for every $p\geq 2$ the following estimate holds: there exists a constant $C_p>0$ such that
    \begin{equation}
    \E\bigg[\sup_{t\leq s\leq T}|X^{t,x,\mu}_s|^p\bigg] \ \leq \ C_p(1 + |x|^p + \Norm{\mu}_2^p). \label{new estimate MKV 1}
    \end{equation}
\end{Proposition}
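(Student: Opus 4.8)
The plan is to treat \eqref{new SDE} as a standard (non-McKean--Vlasov) stochastic differential equation: since the flow $(\P^{t,\mu}_s)_{s\in[t,T]}$ is frozen, the coefficients $r\mapsto b(r,\cdot,\P^{t,\mu}_r)$ and $r\mapsto\sigma(r,\cdot,\P^{t,\mu}_r)$ are merely time-dependent and Lipschitz in the space variable, so the desired bound is a classical $L^p$-moment estimate obtained by combining the linear growth of the coefficients with the Burkholder--Davis--Gundy inequality and Gr\"onwall's lemma. The only genuinely mean-field ingredient is the control of the moments of the frozen flow, which I would extract from Proposition \ref{T:ex-uniq MKV}.

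First I would record the linear growth of $b$ and $\sigma$: combining the Lipschitz property with the bound at $(0,\delta_0)$ in Assumption (\nameref{A1}), and using \eqref{norm_2} to identify $\Wc_2(\nu,\delta_0)=\Norm{\nu}_2$, one gets, for all $r\in[0,T]$, $y\in\R^d$ and $\nu\in\Pc_2(\R^d)$,
\[
|b(r,y,\nu)| + |\sigma(r,y,\nu)| \ \leq \ C\big(1 + |y| + \Norm{\nu}_2\big).
\]
Next, by Remark \ref{R: same marginals}, $\P^{t,\mu}_r=\P_{X^{t,\xi}_r}$ for any $\xi\in L^2(\Fc_t;\R^d)$ with law $\mu$, whence $\Norm{\P^{t,\mu}_r}_2^2 = \E[|X^{t,\xi}_r|^2] \le \Norm{X^{t,\xi}}_{\S^2_{t,T}}^2$, and Proposition \ref{T:ex-uniq MKV} gives $\sup_{r\in[t,T]}\Norm{\P^{t,\mu}_r}_2^2 \le C(1+\Norm{\mu}_2^2)$; raising to the power $p/2$ yields $\Norm{\P^{t,\mu}_r}_2^p \le C_p(1+\Norm{\mu}_2^p)$. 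Substituting the frozen flow into the linear growth bound, the coefficients evaluated along the solution satisfy $|b(r,X_r,\P^{t,\mu}_r)|^p + |\sigma(r,X_r,\P^{t,\mu}_r)|^p \le C_p\big(1 + |X_r|^p + \Norm{\mu}_2^p\big)$.

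Then I would write \eqref{new SDE} in integral form, apply the elementary inequality $|a+b+c|^p\le 3^{p-1}(|a|^p+|b|^p+|c|^p)$, bound the drift term by H\"older's inequality and the stochastic integral by the Burkholder--Davis--Gundy inequality, and insert the growth estimate above. Setting $\phi(u):=\E[\sup_{t\le s\le u}|X^{t,x,\mu}_s|^p]$, this produces an inequality of the form $\phi(u)\le A + B\int_t^u\phi(r)\,dr$, with $A=C_p(1+|x|^p+\Norm{\mu}_2^p)$ and $B$ depending only on $p,T,L,K$; Gr\"onwall's lemma then gives $\phi(T)\le A\,e^{B(T-t)}$, which is the claim. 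The main technical point, as usual, is that Gr\"onwall requires $\phi$ to be finite a priori. I would handle this by the standard localization: introduce the stopping times $\tau_M:=\inf\{s\ge t : |X^{t,x,\mu}_s|\ge M\}\wedge T$, run the argument for the stopped process $X^{t,x,\mu}_{\cdot\wedge\tau_M}$ (whose $p$-th moment is bounded by $M^p$), obtain the bound on $\phi_M(u):=\E[\sup_{t\le s\le u}|X^{t,x,\mu}_{s\wedge\tau_M}|^p]$ uniformly in $M$, and finally let $M\to+\infty$ using Fatou's lemma. Everything else is routine.
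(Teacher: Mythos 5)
Your proposal is correct and follows essentially the same route as the paper: the authors likewise observe that \eqref{new SDE} is a standard (non-McKean--Vlasov) SDE once the flow $(\P^{t,\mu}_s)_{s\in[t,T]}$ is frozen, control that flow via Proposition \ref{T:ex-uniq MKV}, and then invoke the classical $L^p$-moment estimate (citing \cite[Theorem 9.1]{Baldi}) that you instead carry out explicitly with linear growth, Burkholder--Davis--Gundy, Gr\"onwall and localization. The only difference is that you spell out the textbook argument the paper delegates to a reference.
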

\begin{proof}
As equation \eqref{new SDE} is of non-McKean-Vlasov type, the result follows combining estimates of Proposition \ref{T:ex-uniq MKV} on $\P^{t,\mu}$ (recall from \eqref{norm_2} that $\|\P_s^{t,\mu}\|_2=\|X_s^{t,\xi}\|_{L^2}$, $s\in[t,T]$, for every $\xi\in L^2(\Fc_t;\R^d)$ with $\P_\xi=\mu$) with classical results for standard stochastic differential equations (as for instance \cite[Theorem 9.1]{Baldi}).
\end{proof}
\begin{Remark}\label{R: state processes}
    By Kolmogorov's continuity theorem it holds that, up to a suitable modification, a solution $X^{t,x,\mu}$ to \eqref{new SDE} is measurable with respect to the initial state $x\in\R^d$. Then, for every $\xi\in L^2(\Fc_t;\R^d)$ with $\P_\xi = \mu$, the process $X^{t,\xi,\mu}$ is well-defined and solves the stochastic differential equation \eqref{MKV SDE}. Hence, by \textnormal{Proposition \ref{T:ex-uniq MKV}} the two processes $X^{t,\xi,\mu}$ and $X^{t,\xi}$ are indistinguishable.
\end{Remark}

\noindent\textbf{Extended reward functional and value function.} For all $t\in[0,T]$, let $\F^{t,W}=(\Fc^{t,W}_s)_{s\in[t,T]}$ be the $\P$-completion of the filtration generated by the increments of the Brownian motion $(W_s-W_t)_{s\in[t,T]}$ and let $\tilde\Tc_{t,T}$ denote the set of $\F^{t,W}$- stopping times with values in $[t,T]$.\\
    Let $t\in[0,T]$, $x\in\R^d$, $\mu\in\mathcal{P}_2(\R^d)$ and let $(X^{t,x,\mu}_s,\P^{t,\mu}_s)_{s\in[t,T]}$ be the corresponding state process. We define
    \[
            \tilde J(t,x,\mu,\tau) \ \coloneqq \ \E\bigg[\int_{t}^{\tau} f(s,X^{t,x,\mu}_s,\P^{t,\mu}_s)ds \ + \ g(X^{t,x,\mu}_\tau)\bigg]\qquad\text{for all }\tau\in\tilde\Tc_{t,T}
    \]
and the extended value function
        \begin{equation}\label{V tilde}
            \tilde V(t,x,\mu) \ \coloneqq \ \sup_{\tau\in\tilde\Tc_{t,T}} \tilde J(t,x,\mu,\tau).
        \end{equation}
Similarly to Proposition \ref{P: V properties}, we have the following result, which can be proceeding along the same lines.
\begin{Proposition}\label{P: Vtilde properties}
    The extended value function $\Tilde{V}$ is continuous and has sub-quadratic growth in $(x,\mu)$ uniformly in $t$: there exists $C>0$ such that, for all $(t,x,\mu)\in[0,T]\times\R^d\times\mathcal{P}_2(\R^d)$,
        \[
            |\tilde{V}(t,x,\mu)| \ \leq \ C(1 + |x|^2 + \Norm{\mu}_2^2).
        \]
\end{Proposition}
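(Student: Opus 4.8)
The plan is to follow \cite[Proposition 3.3]{cosso1}, transferring regularity from the reward functional $\tilde J$ to $\tilde V$ by means of the elementary inequality
\[
\Big|\sup_{\tau\in\tilde\Tc_{t,T}}\tilde J(t,x,\mu,\tau) - \sup_{\tau\in\tilde\Tc_{t,T}}\tilde J(t,x',\mu',\tau)\Big| \ \le \ \sup_{\tau\in\tilde\Tc_{t,T}}\big|\tilde J(t,x,\mu,\tau) - \tilde J(t,x',\mu',\tau)\big|,
\]
which is valid precisely because the supremum is taken over a common index set. For the growth bound I would first use the sub-quadratic growth of $f$ and $g$ from Assumption (\nameref{A2}) to write, for every $\tau\in\tilde\Tc_{t,T}$,
\[
|\tilde J(t,x,\mu,\tau)| \ \le \ \E\bigg[\int_t^T K\big(1 + |X^{t,x,\mu}_s|^2 + \Norm{\P^{t,\mu}_s}_2^2\big)\,ds + K\Big(1 + \sup_{t\le s\le T}|X^{t,x,\mu}_s|^2\Big)\bigg],
\]
and then bound the right-hand side by $C(1+|x|^2+\Norm{\mu}_2^2)$ using the moment estimate \eqref{new estimate MKV 1} with $p=2$ together with the identity $\Norm{\P^{t,\mu}_s}_2 = \Norm{X^{t,\xi}_s}_{L^2}$ (recall \eqref{norm_2}) and the $\S^2$-estimate of Proposition \ref{T:ex-uniq MKV}. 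Since the constant is independent of $t$ and $\tau$, taking the supremum over $\tau$ yields the claimed sub-quadratic growth.

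Next I would treat continuity in $(x,\mu)$, uniformly in $t$. The key ingredient is the stability estimate $\E[\sup_{t\le s\le T}|X^{t,x,\mu}_s - X^{t,x',\mu'}_s|^2] \le C(|x-x'|^2 + \Wc_2^2(\mu,\mu'))$, which I would establish in two stages: first the flow stability $\Wc_2(\P^{t,\mu}_s,\P^{t,\mu'}_s)\le C\,\Wc_2(\mu,\mu')$ for all $s\in[t,T]$ (a standard McKean-Vlasov estimate, obtained by coupling $\xi,\xi'$ optimally and running the argument behind Proposition \ref{T:ex-uniq MKV}), and then a Gronwall argument on the difference of the two standard equations \eqref{new SDE}, whose coefficients are Lipschitz in $(x,\mu)$ by Assumption (\nameref{A1}). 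With this in hand, I would estimate $\tilde J(t,x,\mu,\tau)-\tilde J(t,x',\mu',\tau)$ by inserting the local uniform continuity of $f$ and the continuity of $g$, controlling the large-state region through a uniform-integrability argument that uses \eqref{new estimate MKV 1} with some $p>2$ and the quadratic growth of $f,g$. As all these bounds are uniform in $\tau$ and in $t$, the transfer inequality above gives that $(x,\mu)\mapsto\tilde V(t,x,\mu)$ is continuous, uniformly in $t$.

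The main obstacle will be continuity in $t$, where the index set of the supremum itself changes. For $t\le t'$ the easy direction comes from the inclusion $\tilde\Tc_{t',T}\subseteq\tilde\Tc_{t,T}$: every $\F^{t',W}$-stopping time valued in $[t',T]$ is also an $\F^{t,W}$-stopping time valued in $[t',T]\subseteq[t,T]$, whence $\tilde V(t,x,\mu)\ge \sup_{\tau\in\tilde\Tc_{t',T}}\tilde J(t,x,\mu,\tau)$, and this last quantity differs from $\tilde V(t',x,\mu)$ by an error tending to $0$ as $t'\to t$, controlled by the time-regularity of the flow, $\E[\sup_{t'\le s\le T}|X^{t,x,\mu}_s - X^{t',x,\mu}_s|^2]\to 0$ and $\Wc_2(\P^{t,\mu}_s,\P^{t',\mu}_s)\to 0$, together with the $O(t'-t)$ contribution of the slab $[t,t']$. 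The reverse inequality is the hard point: given a near-optimal $\tau\in\tilde\Tc_{t,T}$, the natural candidate $\tau\vee t'$ takes values in $[t',T]$ but need not be $\F^{t',W}$-measurable, because $\F^{t,W}$ carries the extra increment $W_{t'}-W_t$ that is absent from $\F^{t',W}$. I expect the crux to be precisely the quantitative control of the loss incurred when replacing $\tau$ by an admissible $\F^{t',W}$-stopping time, exploiting that the influence of $W_{t'}-W_t$ on the reward vanishes as $t'\to t$, so that the two suprema agree in the limit. Combining this with the uniform-in-$t$ continuity obtained above would then give the joint continuity of $\tilde V$ on $[0,T]\times\R^d\times\mathcal{P}_2(\R^d)$.
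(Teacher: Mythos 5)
Your growth bound and your treatment of the continuity in $(x,\mu)$, uniformly in $t$, are sound: the sub-quadratic estimate follows exactly as you say from Assumption (\nameref{A2}) together with \eqref{new estimate MKV 1} and the identity $\Norm{\P^{t,\mu}_s}_2=\Norm{X^{t,\xi}_s}_{L^2}$, and the transfer inequality for suprema over a common index set, combined with the stability of \eqref{new SDE} (flow estimate $\Wc_2(\P^{t,\mu}_s,\P^{t,\mu'}_s)\le C\,\Wc_2(\mu,\mu')$ plus Gronwall) and a uniform-integrability argument to handle the merely locally uniform continuity of $f$ and continuity of $g$, is precisely the expected route. For comparison, the paper itself gives no argument at all here: it simply states that one proceeds as for Proposition \ref{P: V properties}, i.e.\ as in \cite[Proposition 3.3]{cosso1}, so you are doing strictly more work than the text.

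The genuine gap is the one you flag yourself and then do not close: the continuity in $t$. Your ``easy'' direction via $\tilde\Tc_{t',T}\subset\tilde\Tc_{t,T}$ is fine, but for the reverse inequality you only record the hope that ``the influence of $W_{t'}-W_t$ on the reward vanishes''; as written this is not an argument, and it is the step where the whole difficulty of the proposition sits. Two concrete points. First, you cannot repair it by passing to the larger class $\Tc_{t',T}$ (where $\tau\vee t'$ is admissible) via Corollary \ref{C: enlarge stopping times for Vtilde}: that corollary is deduced from Theorem \ref{T:representation V tilde}, whose proof (Step 1.3) already invokes the present proposition, so this would be circular. What is legitimate is to prove the identity $\sup_{\tilde\Tc_{t',T}}=\sup_{\Tc_{t',T}}$ for a \emph{deterministic} initial point directly, using only the independence of $\Fc_{t'}$ from $\Fc^{t',W}_T$ (this is Steps 1.1--1.2 and 2.1 of the proof of Theorem \ref{T:representation V tilde} with a trivial partition, which do not use Proposition \ref{P: Vtilde properties}), but then you must say so and check there is no hidden dependence. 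Second, the standard non-circular fix is not a pathwise replacement of $\tau$ but an identification in law: since $X^{t,x,\mu}$ and every $\tau\in\tilde\Tc_{t,T}$ are measurable functionals of the increments $(W_{t+u}-W_t)_{u\in[0,T-t]}$ alone, $\tilde V(t,x,\mu)$ is a supremum over stopping times $\rho$ with values in $[0,T-t]$ on the canonical Wiener space, and likewise for $\tilde V(t',x,\mu)$ with horizon $T-t'$; one compares the two by truncating $\rho\wedge(T-t')$ and using the shift-invariance of the Wiener measure, with the loss controlled by the slab term and the stability of the state in the initial time. Note also that Assumptions (\nameref{A1})--(\nameref{A2}) impose no continuity of $b,\sigma,f$ in $t$, so any comparison must be routed through the state process at a common absolute time rather than through the coefficients; this is a further reason your heuristic needs to be made precise before the joint continuity of $\tilde V$ can be considered established.
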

\begin{Remark}\label{R: why Ttilde}
    Since $\Fc^{t,W}_s\subset\Fc_s$, for every $s\in[t,T]$, it follows that $\tilde\Tc_{t,T}\subset\Tc_{t,T}$. The choice of $\tilde\Tc_{t,T}$ in the definition \eqref{V tilde} of $\tilde V$ plays a key role in the proof of \textnormal{Theorem \ref{T:representation V tilde}}, where we initially require stopping times to be independent of the initial $\sigma$-algebra. We will prove in \textnormal{Corollary \ref{C: enlarge stopping times for Vtilde}} that we still obtain the same $\tilde V$ if we take the supremum over $\Tc_{t,T}$ in \eqref{V tilde}. Notice however that the choice of $\tilde\Tc_{t,T}$ in \eqref{V tilde} is consistent with the reformulation of our problem. Indeed, while in the original problem the evolution of the state process $X^{t,\xi}$ depends on two independent random components $($the $\Fc_t$-measurable initial state $\xi$ and the increments of the Brownian motion $(W_s-W_t)_{s\in[t,T]}$$)$, in the extended problem the only source of noise for the state process $X^{t,x,\mu}$ is provided by the Brownian increments.
\end{Remark}
\begin{Remark}
    Let  $t\in[0,T]$, $\mu\in\mathcal{P}_2(\R^d)$, $\tau\in\tilde\Tc_{T,t}$, and $\xi\in L^2(\Fc_t;\R^d)$ with $\P_\xi = \mu$. In general $\tilde J(t,\xi,\mu,\tau)\neq J(t,\xi,\tau)$, since $\tilde J(t,\xi,\mu,\tau)$ is a random variable (given by the composition of $\tilde J(t,x,\mu,\tau)$ and $\xi$), while $J(t,\xi,\tau)$ is by definition a deterministic function. Similarly, in general $\tilde V(t,\xi,\mu)\neq V(t,\xi)$. For the precise relation between $\tilde V$ and $V$ see Corollary \ref{C:relation V - V tilde}.
\end{Remark}

\noindent We now proceed to prove the following key equality for the value function $\tilde{V}$. This result is of critical importance for the entire work, as it leads to two fundamental consequences: the explicit formula for the optimal stopping time of the original problem and the dynamic programming principle for the extended problem.
\begin{Theorem}\label{T:representation V tilde}
    Let $t\in[0,T]$, $\eta\in L^2(\Fc_t;\R^d)$ and $\mu \in\mathcal{P}_2(\R^d)$. Then
    \[
        \tilde V(t,\eta,\mu) \ = \ \esssup_{\tau\in\Tc_{t,T}} \E\bigg[\int_{t}^{\tau} f(s,X^{t,\eta,\mu}_s,\P^{t,\mu}_s)ds \ + \ g(X^{t,\eta,\mu}_\tau) \bigg|\Fc_t\bigg], \qquad \P\text{-a.s.}
    \]
\end{Theorem}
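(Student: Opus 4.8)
The plan is to prove the claimed identity by establishing the two inequalities ``$\le$'' and ``$\ge$'' separately. Write the right-hand side as $R:=\esssup_{\sigma\in\Tc_{t,T}}\E[\Gamma(\eta,\sigma)\mid\Fc_t]$, where $\Gamma(\zeta,\sigma):=\int_t^\sigma f(s,X^{t,\zeta,\mu}_s,\P^{t,\mu}_s)\,ds+g(X^{t,\zeta,\mu}_\sigma)$ denotes the pathwise reward; recall from Remark \ref{R: state processes} that, using the $x$-measurable modification of $X^{t,x,\mu}$, the process $X^{t,\eta,\mu}$ with the $\Fc_t$-measurable initial datum $\eta$ is well defined and solves \eqref{new SDE}. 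The left-hand side $\tilde V(t,\eta,\mu)$ is the deterministic continuous map $\tilde V(t,\cdot,\mu)$ composed with $\eta$, so that for each fixed $\omega$ one has $\tilde V(t,\eta(\omega),\mu)=\sup_{\tau\in\tilde\Tc_{t,T}}\tilde J(t,x,\mu,\tau)\big|_{x=\eta(\omega)}$.

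For $\tilde V(t,\eta,\mu)\le R$ the key tool is a freezing argument exploiting that $\Fc^{t,W}_T$ is independent of $\Fc_t$. Fixing first a deterministic $x\in\R^d$ and $\tau\in\tilde\Tc_{t,T}$, the variable $\Gamma(x,\tau)$ is $\Fc^{t,W}_T$-measurable (it depends only on the increments $(W_s-W_t)_{s\ge t}$, on the deterministic flow $(\P^{t,\mu}_s)_s$, and on the $\Fc^{t,W}$-stopping time $\tau$), hence independent of $\Fc_t$; therefore $\E[\Gamma(x,\tau)\mid\Fc_t]=\tilde J(t,x,\mu,\tau)$ a.s. Replacing the constant $x$ by $\eta$ via the standard freezing lemma --- here I would crucially use that $g$ depends on $x$ only, so that $g(X^{t,\eta,\mu}_\tau)$ is a genuine function of $\eta$ and the increments (cf. Remark \ref{R: regularity assumptions}) --- yields $\E[\Gamma(\eta,\tau)\mid\Fc_t]=\tilde J(t,x,\mu,\tau)\big|_{x=\eta}$ a.s. Since $\tilde\Tc_{t,T}\subset\Tc_{t,T}$ (Remark \ref{R: why Ttilde}), the left-hand side is $\le R$, so $\tilde J(t,x,\mu,\tau)|_{x=\eta}\le R$ a.s. for each fixed $\tau$. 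To pass to the supremum while controlling the $\tau$-dependence of the exceptional null sets, I would produce a countable family $\{\tau_n\}\subset\tilde\Tc_{t,T}$ with $\sup_n\tilde J(t,x,\mu,\tau_n)=\tilde V(t,x,\mu)$ for every $x$: pick maximizing sequences at each rational $x$, then extend to all $x$ using that $x\mapsto\tilde J(t,x,\mu,\tau)$ is continuous locally uniformly in $\tau$ (a consequence of $\Norm{X^{t,x,\mu}-X^{t,x',\mu}}_{\S^2_{t,T}}\le C|x-x'|$ together with Assumption (\nameref{A2}), exactly as in Proposition \ref{P: Vtilde properties}). Evaluating at $x=\eta$ and taking the countable supremum gives $\tilde V(t,\eta,\mu)\le R$ a.s.

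For the reverse inequality it suffices to show $\E[\Gamma(\eta,\sigma)\mid\Fc_t]\le\tilde V(t,\eta,\mu)$ a.s. for each fixed $\sigma\in\Tc_{t,T}$, since the essential supremum of a family of random variables all dominated by the same $\tilde V(t,\eta,\mu)$ is again dominated by it. The decisive structural fact is that, for $s\ge t$, $\Fc_s=\Fc_t\vee\Fc^{t,W}_s$ (up to $\P$-null sets), so that conditioning on $\Fc_t$ turns any $\sigma\in\Tc_{t,T}$ into a stopping time of the increment filtration, i.e. conditionally an element of $\tilde\Tc_{t,T}$. I would make this rigorous through a regular conditional probability $\P^{\omega_0}:=\P(\cdot\mid\Fc_t)(\omega_0)$: for $\P$-a.e. $\omega_0$, the variable $\eta$ is frozen to the constant $\eta(\omega_0)$, the increments keep their Brownian law (by independence from $\Fc_t$), and $\sigma$ becomes $\P^{\omega_0}$-a.s. an $\Fc^{t,W}$-stopping time $\sigma^{\omega_0}\in\tilde\Tc_{t,T}$. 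Identifying the conditional expectation with the corresponding extended functional then gives $\E[\Gamma(\eta,\sigma)\mid\Fc_t](\omega_0)=\tilde J(t,\eta(\omega_0),\mu,\sigma^{\omega_0})\le\tilde V(t,\eta(\omega_0),\mu)$, which is the desired bound.

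The hard part will be precisely this last step: making the conditioning rigorous. One must construct the regular conditional probability, verify that under it $\sigma$ is almost surely a \emph{stopping time} with respect to the increment filtration (not merely measurable), and check that the conditional law of the driving increments is unchanged, so that the conditional expectation genuinely coincides with $\tilde J(t,\eta(\omega_0),\mu,\cdot)$ evaluated at an admissible stopping time in $\tilde\Tc_{t,T}$. This is exactly where the choice of $\tilde\Tc_{t,T}$ as increment stopping times in the definition of $\tilde V$ pays off (cf. Remark \ref{R: why Ttilde}). By comparison the freezing direction is routine modulo the countable approximation of the supremum, whose only non-trivial ingredient is the equicontinuity in $x$, uniform in $\tau$, of the family $\{\tilde J(t,\cdot,\mu,\tau)\}_\tau$.
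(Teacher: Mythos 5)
Your proposed proof is sound in outline but follows a genuinely different route from the paper's. For the inequality $\tilde V(t,\eta,\mu)\le R$ you use a freezing lemma together with a countable reduction of the supremum over $\tilde\Tc_{t,T}$ (maximizing sequences at rational $x$, extended by equicontinuity of $x\mapsto\tilde J(t,x,\mu,\tau)$ uniformly in $\tau$); the paper instead restricts first to simple $\eta=\sum_i x_i\mathbbm{1}_{A_i}$, glues $\varepsilon$-optimal increment stopping times $\tau_{\varepsilon,i}$ into $\tau_\varepsilon=\sum_i\tau_{\varepsilon,i}\mathbbm{1}_{A_i}\in\Tc_{t,T}$, and then passes to general $\eta$ by $L^2$/a.s.\ approximation. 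Both work; yours avoids the double approximation at the cost of the equicontinuity estimate, which is indeed available from Assumption (\nameref{A2}) and the Lipschitz dependence of the flow on $x$. For the reverse inequality the divergence is more substantial: you propose disintegrating via a regular conditional probability given $\Fc_t$, under which $\eta$ freezes, the increments keep their law, and a general $\sigma\in\Tc_{t,T}$ becomes $\P^{\omega_0}$-a.s.\ an element of $\tilde\Tc_{t,T}$. The paper avoids conditional measures entirely: it invokes Lemma \ref{L: stopping times} (imported from a companion paper) to approximate any $\sigma\in\Tc_{t,T}$ almost surely by stopping times of the form $\sum_i\tau_{n,i}\mathbbm{1}_{B_{n,i}}$ with $\tau_{n,i}\in\tilde\Tc_{t,T}$ and $B_{n,i}\in\Fc_t$, for which the bound is linear-algebraically immediate, and then passes to the limit by (conditional) dominated convergence using the sub-quadratic growth of $f,g$ and path continuity. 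Your disintegration step is the exact functional analogue of that lemma, and it is where essentially all the work of this direction lives: you must handle the fact that $\F$ and $\F^{t,W}$ are $\P$-completions (so that $\sigma^{\omega_0}$ is genuinely a stopping time of the completed increment filtration under $\P^{\omega_0}$), and you must verify that the stochastic integral defining $X^{t,\eta,\mu}$, constructed $\P$-a.s., can be identified under $\P^{\omega_0}$ with $X^{t,\eta(\omega_0),\mu}$. These points are resolvable (independence of $\Fc^{t,W}_T$ from $\Fc_t$ makes $\P^{\omega_0}$ agree with $\P$ on the increment $\sigma$-algebra), but as written they are announced rather than proved, so the burden you have deferred is precisely the content that the paper outsources to Lemma \ref{L: stopping times}.
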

\noindent In order to prove Theorem \ref{T:representation V tilde} we need the following preliminary result.
\begin{Lemma}\label{L: stopping times}
    Let $t\in[0,T]$ and $\tau\in\Tc_{t,T}$. Then there exists a sequence $(K_n)_n\subset\N$ and a sequence of stopping times $(\tau_n)_n\in\Tc_{t,T}$ converging to $\tau$ almost surely and such that, for every $n$,
    \[
        \tau_n = \sum_{i=1}^{K_n}\tau_{n,i}\mathbbm{1}_{B_{n,i}}, 
    \]
     where $\tau_{n,i}\in\tilde\Tc_{t,T}$ for all $i\in\{1,...,K_n\}$ and $\{B_{n,1},...,B_{n,K_n}\}\subset\Fc_t$ is a partition of $\Omega$.
\end{Lemma}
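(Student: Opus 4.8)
The plan is to construct the approximating sequence in two stages: first replace $\tau$ by finitely-valued stopping times, and then, for a finitely-valued stopping time, split its dependence on $\Fc_t$ into finitely many cells on each of which it reduces to a stopping time of the Brownian increments alone. The structural fact underlying everything is the identity
\[
\Fc_s \ = \ \Fc_t \vee \Fc^{t,W}_s, \qquad s \in [t,T],
\]
which follows from $\Fc_s = \Fc^W_s \vee \Gc$ together with the (completed) decomposition $\Fc^W_s = \Fc^W_t \vee \sigma(W_r - W_t : t \le r \le s)$ and the independence of the increments from $\Fc_t$.

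\textbf{Step 1 (reduction to finitely many values).} For $m \in \N$ let $D_m = \{t + k(T-t)2^{-m} : k = 0,\dots,2^m\}$ and set $\tau^m := \min\{s \in D_m : s \ge \tau\}$. Each $\tau^m$ is an $\F$-stopping time taking finitely many values in $[t,T]$, and $\tau \le \tau^m \le \tau + (T-t)2^{-m}$, so $\tau^m \downarrow \tau$ almost surely. It therefore suffices to approximate, for each fixed $m$, the finitely-valued stopping time $\sigma := \tau^m$ by stopping times of the prescribed form, and then to diagonalize.

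\textbf{Step 2 (decomposition of a finitely-valued stopping time).} Write $\sigma = \sum_{j=1}^N s_j \mathbbm{1}_{A_j}$ with $s_1 < \dots < s_N$ in $[t,T]$, and set $\Sigma_j := \{\sigma \le s_j\} = \bigcup_{k \le j} A_k$, so that $\Sigma_1 \subseteq \dots \subseteq \Sigma_N = \Omega$ and $\Sigma_j \in \Fc_{s_j} = \Fc_t \vee \Fc^{t,W}_{s_j}$. Finite disjoint unions of ``rectangles'' $D \cap E$ with $D \in \Fc_t$ and $E \in \Fc^{t,W}_{s_j}$ form an algebra generating $\Fc_{s_j}$, so by the standard approximation theorem, given $\varepsilon > 0$ we may find, after passing to a common refinement over $j$, a single finite $\Fc_t$-partition $\{B_1,\dots,B_K\}$ of $\Omega$ and sets $E_i^j \in \Fc^{t,W}_{s_j}$ with $\P\big(\Sigma_j \,\triangle\, \bigsqcup_{i} (B_i \cap E_i^j)\big) < \varepsilon$ for every $j$. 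To restore the stopping-time (nesting) structure we replace $E_i^j$ by $\hat E_i^j := \bigcup_{k \le j} E_i^k \in \Fc^{t,W}_{s_j}$ (with the conventions $\hat E_i^0 := \emptyset$ and $\hat E_i^N := \Omega$, which is harmless as $\Sigma_N = \Omega$), which is nondecreasing in $j$; using $\Sigma_j = \bigcup_{k\le j}\Sigma_k$ and the disjointness of the $B_i$ one checks that $\bigsqcup_i (B_i \cap \hat E_i^j)$ still approximates $\Sigma_j$ up to error $N\varepsilon$. Defining the stopping times $\tau_i := \sum_{j} s_j \mathbbm{1}_{\hat E_i^j \setminus \hat E_i^{j-1}} \in \tilde\Tc_{t,T}$ and $\sigma_\varepsilon := \sum_{i=1}^K \tau_i \mathbbm{1}_{B_i}$, we obtain an object of exactly the desired form: indeed $\{\sigma_\varepsilon \le s_j\} = \bigsqcup_i (B_i \cap \hat E_i^j) \in \Fc_{s_j}$, so $\sigma_\varepsilon \in \Tc_{t,T}$, and summing the level-set estimates gives $\P(\sigma_\varepsilon \ne \sigma) \le N^2 \varepsilon$.

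\textbf{Step 3 (assembly).} For each $m$ apply Step 2 with $\varepsilon = \varepsilon_m$ small enough that the resulting $\tau_m$ (of the prescribed form) satisfies $\P(\tau_m \ne \tau^m) \le 2^{-m}$. By Borel--Cantelli $\tau_m = \tau^m$ for all large $m$ almost surely, whence $\tau_m \to \tau$ a.s. since $\tau^m \to \tau$; this $(\tau_m)_m$ has all the required properties. The main obstacle is Step 2: the rectangle approximation only controls each sublevel set $\Sigma_j$ in probability and does not by itself produce a stopping time, so the delicate points are (i) extracting a single $\Fc_t$-partition valid simultaneously for all levels $j$, and (ii) restoring the monotonicity $\hat E_i^1 \subseteq \dots \subseteq \hat E_i^N$ of the per-cell sublevel sets without destroying the approximation, which is precisely what guarantees that the assembled object is a genuine $\F$-stopping time of the form $\sum_i \tau_i \mathbbm{1}_{B_i}$. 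Upgrading in-probability to almost-sure convergence and merging the two approximation layers is then routine via Borel--Cantelli.
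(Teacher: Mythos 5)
The paper does not actually prove this lemma: its ``proof'' is a one-line citation of Theorem 3.1 in the companion paper \cite{CossoPerelli_Approx}, so there is no internal argument to compare yours against. Your self-contained proof is correct. The two genuinely delicate points are exactly the ones you isolate: (i) the decomposition $\Fc_{s}=\Fc_t\vee\Fc^{t,W}_{s}$, which lets you approximate each sublevel set $\Sigma_j=\{\sigma\le s_j\}$ of the dyadically discretized stopping time, in probability, by a finite disjoint union of rectangles $B_i\cap E_i^j$ with $B_i\in\Fc_t$ and $E_i^j\in\Fc^{t,W}_{s_j}$ (after a common refinement so that one $\Fc_t$-partition serves all levels $j$); and (ii) the monotone repair $\hat E_i^j=\bigcup_{k\le j}E_i^k$, which restores the nesting of the per-cell sublevel sets at the cost of a factor $N$ in the error and is what makes $\sigma_\varepsilon=\sum_i\tau_i\mathbbm{1}_{B_i}$ a genuine element of $\Tc_{t,T}$ with each $\tau_i\in\tilde\Tc_{t,T}$. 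The bookkeeping checks out: $\{\sigma_\varepsilon\le s_j\}=\bigsqcup_i(B_i\cap\hat E_i^j)$ gives both measurability claims, the union bound over levels yields $\P(\sigma_\varepsilon\neq\sigma)\le N^2\varepsilon$, and since $\varepsilon$ may be taken as small as desired for each fixed $m$, Borel--Cantelli upgrades the two-layer approximation to almost-sure convergence along the whole sequence. The only caveat worth recording explicitly is that the identity $\Fc_s=\Fc_t\vee\Fc^{t,W}_s$ and the rectangle-algebra approximation hold modulo $\P$-null sets, which is harmless here because all the $\sigma$-algebras involved are $\P$-complete and the final estimate is in probability anyway.
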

\begin{proof}
See \cite[Theorem 3.1]{CossoPerelli_Approx}.
\end{proof}
\begin{proof}[Proof of Theorem \ref{T:representation V tilde}]
    We split the proof into several steps.
   
   \vspace{2mm}
   
   \noindent\textsc{Step} 1. We begin by proving the inequality
    \begin{equation}\label{ClaimStep1}
        \tilde V(t,\eta,\mu) \ \geq \ \esssup_{\tau\in\Tc_{t,T}} \E\bigg[\int_{t}^{\tau} f(s,X^{t,\eta,\mu}_s,\P^{t,\mu}_s)ds \ + \ g(X^{t,\eta,\mu}_\tau) \bigg|\Fc_t\bigg]
    \end{equation}
    through the following three steps.

    \vspace{2mm}
    
    \noindent\textsc{Step} 1.1. We suppose that $\eta = \sum_{i=1}^n x_i\mathbbm{1}_{A_i}$ where $x_i\in\R^d$ and $(A_i)_i\subset\Fc_t$ is a partition of $\Omega$. Since $\tilde V(t,\eta,\mu)$ is given by the composition of $\tilde V(t,x,\mu)$ with $\eta$, we have
    \begin{equation*}
        \tilde V(t,\eta,\mu) \ = \ \sum_{i=1}^n     \tilde V(t,x_i,\mu)\,\mathbbm{1}_{A_i}.
    \end{equation*}
    By definition of $\tilde V(t,x_i,\mu)$, for every $\tau\in\tilde\Tc_{t,T}$ fixed, it holds that
    \[
        \tilde V(t,\eta,\mu) \ \geq \ \sum_{i=1}^n \E\bigg[\int_t^{\tau} f(s, X^{t,x_i,\mu}_s, \P^{t,\mu}_s)ds + g(X^{t,x_i,\mu}_{\tau})\bigg]\mathbbm{1}_{A_i}.
    \] 
    By definition of $\tilde\Tc_{t,T}$, $\tau$ is independent of $\Fc_t$. Similarly, $X^{t,x_i,\mu}$ is independent of $\Fc_t$ as it depends only on the increments $(W_s-W_t)_{s\in[t,T]}$. Then, we can replace the expected value in the latter inequality with a conditional expectation as follows:
    
    \[
        \tilde V(t,\eta,\mu) \ \geq \ \sum_{i=1}^n \E\bigg[\int_t^{\tau} f(s, X^{t,x_i,\mu}_s, \P^{t,\mu}_s)ds + g(X^{t,x_i,\mu}_{\tau})\bigg|\Fc_t\bigg]\mathbbm{1}_{A_i}.
    \]
    Now, since $A_i\in\Fc_t$ for every $i$,
    \begin{align*}
        \tilde V(t,\eta,\mu) \ &\geq \ \sum_{i=1}^n \E\bigg[\mathbbm{1}_{A_i}\bigg(\int_t^{\tau} f(s, X^{t,x_i,\mu}_s, \P^{t,\mu}_s)ds + g(X^{t,x_i,\mu}_{\tau})\bigg)\bigg|\Fc_t\bigg]\\
        &= \ \E\bigg[\sum_{i=1}^n \mathbbm{1}_{A_i}\bigg(\int_t^{\tau} f(s, X^{t,x_i,\mu}_s, \P^{t,\mu}_s)ds + g(X^{t,x_i,\mu}_{\tau})\bigg)\bigg|\Fc_t\bigg]\\
        &= \ \E\bigg[\sum_{i=1}^n \mathbbm{1}_{A_i}\bigg(\int_t^{\tau} f(s, X^{t,\eta,\mu}_s, \P^{t,\mu}_s)ds + g(X^{t,\eta,\mu}_{\tau})\bigg)\bigg|\Fc_t\bigg],
    \end{align*}
    where in the last inequality we used that $\eta = x_i$ on each set $A_i$. As a consequence, recalling that $(A_i)_i$ is a partition of $\Omega$,
    \begin{equation}
        \tilde V(t,\eta,\mu) \ \geq \ \E\bigg[\int_t^{\tau} f(s, X^{t,\eta,\mu}_s, \P^{t,\mu}_s)ds + g(X^{t,\eta,\mu}_{\tau})\bigg|\Fc_t\bigg]. \label{passo 1.1}
    \end{equation}
   Furthermore, by linearity, the inequality \eqref{passo 1.1} is verified also for stopping times of the form $\tau = \sum_{i=1}^n\tau_i\mathbbm{1}_{A_i}$, with $\tau_i\in\tilde\Tc_{t,T}$ and $A_i$ as above.
   
   \vspace{2mm}
   
   \noindent\textsc{Step} 1.2. We prove \eqref{passo 1.1} with $\eta$ as in \textsc{Step} 1.1 but with $\tau\in\Tc_{t,T}$. By Lemma \ref{L: stopping times} there exists a sequence of stopping times $(\tau_n)_n\subset\Tc_{t,T}$ converging almost surely to $\tau$ and such that
   \[
        \tau_n = \sum_{i=1}^{K_n}\tau_{n,i}\mathbbm{1}_{A_{n,i}},
   \] 
   where $\tau_{n,i}\in\tilde\Tc_{t,T}$ and $\{A_{n,1},...,A_{n,K_n}\}\subset\Fc_t$ is a partition of $\Omega$ for each $n$. By the previous step, it holds that
    \[
        \tilde V(t,\eta,\mu) \ \geq \  \E\bigg[\int_t^{\tau_n} f(s, X^{t,\eta,\mu}_s, \P^{t,\mu}_s)ds + g(X^{t,\eta,\mu}_{\tau_n})\bigg|\Fc_t\bigg].
    \]
    Then, we only need to prove that
    \begin{align}
        \E\bigg[\int_t^{\tau_n} f(s, &X^{t,\eta,\mu}_s, \P^{t,\mu}_s)ds + g(X^{t,\eta,\mu}_{\tau_n})\bigg|\Fc_t\bigg] \label{Proof_Step2} \\
        &\overset{n\to+\infty}{\longrightarrow} \ \E\bigg[\int_t^{\tau} f(s, X^{t,\eta,\mu}_s, \P^{t,\mu}_s)ds + g(X^{t,\eta,\mu}_{\tau} )\bigg|\Fc_t\bigg]\quad\text{a.s.} \notag
    \end{align}
    By the continuity of $g$ and the fact that $X^{t,\eta,\mu}$ has continuous trajectories, we get
    \[
        g(X^{t,\eta,\mu}_{\tau_n}) \ \longrightarrow \ g(X^{t,\eta,\mu}_{\tau})\qquad\text{a.s.}\qquad\text{as }n\to+\infty.
    \]
    On the other hand, by Lebesgue's dominated convergence theorem,  we have that
    \begin{align*}
        &\int_t^{\tau_n} f(s, X^{t,\eta,\mu}_s, \P^{t,\mu}_s)ds \ = \ \int_t^{T} f(s, X^{t,\eta,\mu}_s, \P^{t,\mu}_s)\mathbbm{1}_{[t,\tau_n]}(s)ds\\
        & \overset{n\to+\infty}{\longrightarrow} \int_t^{T} f(s, X^{t,\eta,\mu}_s, \P^{t,\mu}_s)\mathbbm{1}_{[t,\tau]}(s)ds \ = \ \int_t^{\tau} f(s, X^{t,\eta,\mu}_s, \P^{t,\mu}_s)ds\quad\text{a.s.}
    \end{align*}
    We can use Lebesgue's dominated convergence theorem thanks to the quadratic growth of $f$ in $(x,\mu)$ and the fact that $X^{t,\eta,\mu}$ and $\P^{t,\mu}$ have continuous trajectories; more precisely, we have
    \begin{align*}
        &\big|f(s, X^{t,\eta,\mu}_s, \P^{t,\mu}_s)\mathbbm{1}_{[t,\tau_n]}(s)-f(s, X^{t,\eta,\mu}_s, \P^{t,\mu}_s)\mathbbm{1}_{[t,\tau]}(s)\big| \ \leq \ 2|f(s, X^{t,\eta,\mu}_s, \P^{t,\mu}_s)| \\
        &\leq \ 2C(1+|X^{t,\eta,\mu}_s|^2+\Norm{ \P^{t,\mu}_s}_2^2) \ \leq \ 2C\bigg(1+\sup_{s\in[t,T]}|X^{t,\eta,\mu}_s|^2+\sup_{s\in[t,T]}\Norm{ \P^{t,\mu}_s}_2^2\bigg)\eqqcolon M \ < \ +\infty.
    \end{align*}
    Besides, it holds that
    \begin{align*}
        &\bigg|\int_t^{T} f(s, X^{t,\eta,\mu}_s, \P^{t,\mu}_s)\mathbbm{1}_{[t,\tau_n]}(s)ds-\int_t^{T} f(s, X^{t,\eta,\mu}_s, \P^{t,\mu}_s)\mathbbm{1}_{[t,\tau]}(s)ds\bigg|+|g(X^{t,\eta,\mu}_{\tau_n})- g(X^{t,\eta,\mu}_{\tau})|\\
        &\leq \ \int_t^{T}|f(s, X^{t,\eta,\mu}_s, \P^{t,\mu}_s)||\mathbbm{1}_{[t,\tau_n]}(s)-\mathbbm{1}_{[t,\tau]}(s)|ds + |g(X^{t,\eta,\mu}_{\tau_n})|+|g(X^{t,\eta,\mu}_{\tau})|\\
        &\leq \ 2C\int_t^{T}\bigg(1+\sup_{r\in[t,T]}|X^{t,\eta,\mu}_r|^2+\sup_{r\in[t,T]}\Norm{ \P^{t,\mu}_r}_2^2\bigg)ds + 2C\bigg(1+\sup_{r\in[t,T]}|X^{t,\eta,\mu}_r|^2\bigg)\\
        &\leq \ 2CT\bigg(1+\sup_{r\in[t,T]}|X^{t,\eta,\mu}_r|^2+\sup_{r\in[t,T]}\Norm{ \P^{t,\mu}_r}_2^2\bigg)+2C\bigg(1+\sup_{r\in[t,T]}|X^{t,\eta,\mu}_r|^2\bigg) \ \in \ L^1(\Omega).
    \end{align*}
    Thus, by the conditional dominated convergence theorem, for every $\tau\in\Tc_{t,T}$ and $\eta = \sum_{i=1}^n x_i\mathbbm{1}_{A_i}$ we have that \eqref{passo 1.1} holds.

    \vspace{2mm}
    
    \noindent\textsc{Step} 1.3. In this final step we consider a generic $\eta\in L^2(\Fc_t;\R^d)$. It is well-known that $\eta$ can be approximated in $L^2$-norm and in the $\P$-almost sure sense by a sequence of simple random variables $(\eta_n)_n\subset L^2(\Fc_t;\R^d)$ of the form
    \[
        \eta_n = \sum_{i=1}^{N_n} x_{i,n}\mathbbm{1}_{A_{n,i}}
    \]
    where $x_{i,n}\in\R^d$ and $\{A_{n,1},...,A_{n,N_n}\}\subset\Fc_t$ is a partition of $\Omega$ for all $n$. 
    By the previous step, for every $n$ and for every $\tau\in\Tc_{t,T}$, it holds that
    \begin{equation}\label{passo 1.3}
        \tilde V(t,\eta_n,\mu) \ \geq \   \E\bigg[\int_t^{\tau} f(s, X^{t,\eta_n,\mu}_s, \P^{t,\mu}_s)ds + g(X^{t,\eta_n,\mu}_{\tau})\bigg|\Fc_t\bigg].
    \end{equation}
    To conclude the proof, it remains to take the limit as $n$ tends to infinity. From Proposition \ref{P: Vtilde properties}, we already know that, since $\eta_n\longrightarrow\eta$ pointwise almost surely,
    \[
        \tilde{V}(t,\eta_n,\mu) \ \longrightarrow \ \tilde{V}(t,\eta,\mu)\qquad\text{a.s.}\qquad\text{as }n\rightarrow+\infty.
    \]
    So we only need to show that, for every $\tau\in\Tc_{t,T}$,
    \begin{align}
        \E\bigg[\int_t^{\tau} f(s, &X^{t,\eta_n,\mu}_s, \P^{t,\mu}_s)ds + g(X^{t,\eta_n,\mu}_{\tau})\bigg|\Fc_t\bigg] \label{ProofStep1.3} \\
        &\overset{n\to+\infty}{\longrightarrow} \ \E\bigg[\int_t^{\tau} f(s, X^{t,\eta,\mu}_s, \P^{t,\mu}_s)ds + g(X^{t,\eta,\mu}_{\tau})\bigg|\Fc_t\bigg]\quad\text{a.s.} \notag
    \end{align}
    The proof can be done proceeding along the same lines as in the proof of \eqref{Proof_Step2}. As a consequence, letting $n\to+\infty$ in \eqref{passo 1.3}, we get
    \[
        \tilde V(t,\eta,\mu) \ \geq \ \E\bigg[\int_t^{\tau} f(s, X^{t,\eta,\mu}_s, \P^{t,\mu}_s)ds + g(X^{t,\eta,\mu}_{\tau} )\bigg|\Fc_t\bigg],\qquad\text{for all }\tau\in\Tc_{t,T}.
    \]
    This implies, by definition of essential supremum, that \eqref{ClaimStep1} holds.

    \vspace{2mm}
    
    \noindent\textsc{Step} 2. We prove the opposite inequality
    \begin{equation}\label{ClaimStep2}
        \tilde V(t,\eta,\mu) \ \leq \ \esssup_{\tau\in\Tc_{t,T}} \E\bigg[\int_{t}^{\tau} f(s,X^{t,\eta,\mu}_s,\P^{t,\mu}_s)ds \ + \ g(X^{t,\eta,\mu}_\tau
        ) \bigg|\Fc_t\bigg].
    \end{equation}
    \textsc{Step} 2.1. We suppose that $\eta = \sum_{i=1}^n x_i\mathbbm{1}_{A_i}$ where $x_i\in\R^d$ and $(A_i)_i\subset\Fc_t$ is a partition of $\Omega$. By definition of $\tilde V(t,x_i,\mu)$, for every $\varepsilon>0$ and  $i$ fixed, there exists $\tau_{\varepsilon,i}\in\tilde\Tc_{t,T}$ such that
    \begin{align*}
        \tilde V(t,\eta,\mu) \ &= \ \sum_{i=1}^n \tilde V(t,x_i,\mu)\mathbbm{1}_{A_i}\\
        &\leq \ \sum_{i=1}^n \bigg(\E\bigg[\int_t^{\tau_{\varepsilon,i}} f(s, X^{t,x_i,\mu}_s, \P^{t,\mu}_s)ds + g(X^{t,x_i,\mu}_{\tau_{\varepsilon,i}})\bigg] \ + \ \varepsilon\bigg)\mathbbm{1}_{A_i}\\
        &= \ \sum_{i=1}^n \E\bigg[\int_t^{\tau_{\varepsilon,i}} f(s, X^{t,x_i,\mu}_s, \P^{t,\mu}_s)ds + g(X^{t,x_i,\mu}_{\tau_{\varepsilon,i}})\bigg]\mathbbm{1}_{A_i} \ + \ \varepsilon.
    \end{align*}
    Proceeding as in \textsc{Step} 1.1, we have
    \begin{align*}
        \tilde V(t,\eta,\mu) \ &\leq \ \sum_{i=1}^n \E\bigg[\int_t^{\tau_{\varepsilon,i}} f(s, X^{t,x_i,\mu}_s, \P^{t,\mu}_s)ds + g(X^{t,x_i,\mu}_{\tau_{\varepsilon,i}})\bigg|\Fc_t\bigg]\mathbbm{1}_{A_i} \ + \ \varepsilon\\
        &= \ \E\bigg[\sum_{i=1}^n \mathbbm{1}_{A_i}\bigg(\int_t^{\tau_{\varepsilon,i}} f(s, X^{t,x_i,\mu}_s, \P^{t,\mu}_s)ds + g(X^{t,x_i,\mu}_{\tau_{\varepsilon,i}})\bigg)\bigg|\Fc_t\bigg]\ + \ \varepsilon.
    \end{align*}
    Let $\tau_\varepsilon \coloneqq \sum_{i=1}^n \tau_{\varepsilon,i}\mathbbm{1}_{A_i}$. It holds that $\tau_\varepsilon\in\Tc_{t,T}$, and then
    \begin{align*}
        \tilde V(t,\eta,\mu) \ &\leq \ \E\bigg[\sum_{i=1}^n \mathbbm{1}_{A_i}\bigg(\int_t^{\tau_{\varepsilon}} f(s, X^{t,\eta,\mu}_s, \P^{t,\mu}_s)ds + g(X^{t,\eta,\mu}_{\tau_{\varepsilon}})\bigg)\bigg|\Fc_t\bigg]\ + \ \varepsilon\\
        &= \ \E\bigg[\int_t^{\tau_{\varepsilon}} f(s, X^{t,\eta,\mu}_s, \P^{t,\mu}_s)ds + g(X^{t,\eta,\mu}_{\tau_{\varepsilon}})\bigg|\Fc_t\bigg]\ + \ \varepsilon.
    \end{align*}
    By the definition of essential supremum and from the arbitrariness of $\varepsilon$, we see that \eqref{ClaimStep2} holds.

    \vspace{2mm}
    
    \noindent\textsc{Step} 2.2. Consider a generic $\eta\in L^2(\Fc_t;\R^d)$. As in \textsc{Step} 1.2, we can find a sequence of simple random variables $(\eta_n)_n\subset L^2(\Fc_t;\R^d)$ which converges in $L^2$ and in the $\P$-almost sure sense to $\eta$, so that
    \[
        \tilde V(t,\eta_n,\mu) \ \longrightarrow \ \tilde V(t,\eta,\mu)\qquad\text{a.s.}\qquad\text{as }n\to+\infty.
    \]
    To conclude the proof it is enough to show that
    \begin{align*}
        \esssup_{\tau\in\Tc_{t,T}}\E\bigg[\int_t^{\tau} f(s, &X^{t,\eta_n,\mu}_s, \P^{t,\mu}_s)ds + g(X^{t,\eta_n,\mu}_{\tau})\bigg|\Fc_t\bigg]\\
        &\overset{n\to+\infty}{\longrightarrow}\esssup_{\tau\in\Tc_{t,T}}\E\bigg[\int_t^{\tau} f(s, X^{t,\eta,\mu}_s, \P^{t,\mu}_s)ds + g(X^{t,\eta,\mu}_{\tau})\bigg|\Fc_t\bigg]\quad\text{a.s.}
    \end{align*}
    Since for every collection of real-valued random variables $\Hc$ on $(\Omega,\Fc,\P)$ it holds that $(\esssup_{\Hc}X$ $-$ $\esssup_{\Hc}Y)$ $\leq$ $\esssup_{\Hc}(X-Y)$ a.s., we get
    \begin{align*}
        &\bigg|\esssup_{\tau\in\Tc_{t,T}} \E\bigg[\int_t^{\tau} \big(f(s, X^{t,\eta_n,\mu}_s, \P^{t,\mu}_s) - f(s, X^{t,\eta,\mu}_s, \P^{t,\mu}_s)\big)ds + \big(g(X^{t,\eta_n,\mu}_{\tau}) - g(X^{t,\eta,\mu}_{\tau})\big) \bigg|\Fc_t\bigg]\bigg|\\
        &\leq \ \esssup_{\tau\in\Tc_{t,T}}\E\bigg[\int_t^{\tau} \big|f(s,X^{t,\eta_n,\mu}_s, \P^{t,\mu}_s) - f(s, X^{t,\eta,\mu}_s, \P^{t,\mu}_s)\big|ds + \big|g(X^{t,\eta_n,\mu}_{\tau}) - g(X^{t,\eta,\mu}_{\tau})\big| \bigg|\Fc_t\bigg].
    \end{align*}
    As seen in \textsc{Step} 1.3, for each $\tau\in\Tc_{t,T}$ we have that \eqref{ProofStep1.3} holds. In particular, the convergence is uniform in $\tau\in\Tc_{t,T}$, that is
    \[
        \esssup_{\tau\in\Tc_{t,T}}\E\bigg[\int_t^{\tau} \big|f(s,X^{t,\eta_n,\mu}_s, \P^{t,\mu}_s) - f(s, X^{t,\eta,\mu}_s, \P^{t,\mu}_s)\big|ds \ + \ \big|g(X^{t,\eta_n,\mu}_{\tau}) - g(X^{t,\eta,\mu}_{\tau})\big| \bigg|\Fc_t\bigg]\longrightarrow 0
    \]
    almost surely as $n$ goes to infinity, from which the claim follows.
\end{proof}
\noindent As a consequence of Theorem \ref{T:representation V tilde}, we get that the extended value function $\Tilde{V}$, which we have originally defined as the supremum over $\Tilde{\Tc}_{t,T}$, can be equivalently defined as the supremum over the larger set $\Tc_{t,T}$.
\begin{Corollary}\label{C: enlarge stopping times for Vtilde}
    Let $t\in[0,T],\,x\in\R^d,\,\mu\in\mathcal{P}_2(\R^d)$. Then 
    \[
        \Tilde{V}(t,x,\mu) \ = \ \sup_{\tau\in\Tc_{t,T}}\E\bigg[\int_t^{\tau} f(s, X^{t,x,\mu}_s, \P^{t,\mu}_s)ds + g(X^{t,x,\mu}_{\tau})\bigg].
    \]
\end{Corollary}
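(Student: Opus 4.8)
The plan is to deduce the statement directly from Theorem~\ref{T:representation V tilde}, specialised to the \emph{deterministic} initial datum $\eta\equiv x\in\R^d$. The guiding observation is that, while for a genuinely random $\eta$ the representation in Theorem~\ref{T:representation V tilde} is an identity between random variables, for a constant $\eta=x$ its left-hand side $\tilde V(t,x,\mu)$ is a deterministic number; consequently the essential supremum of conditional expectations on the right-hand side is, $\P$-a.s., equal to a constant. Writing
\[
R^\tau \ := \ \int_t^\tau f\big(s,X^{t,x,\mu}_s,\P^{t,\mu}_s\big)\,ds \ + \ g\big(X^{t,x,\mu}_\tau\big), \qquad \tau\in\Tc_{t,T},
\]
Theorem~\ref{T:representation V tilde} with $\eta=x$ thus gives
\[
\tilde V(t,x,\mu) \ = \ \esssup_{\tau\in\Tc_{t,T}} \E\big[R^\tau\,\big|\,\Fc_t\big], \qquad \P\text{-a.s.},
\]
with the left-hand side a constant.

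From here I would prove the two inequalities separately. For ``$\le$'', since $\tilde\Tc_{t,T}\subset\Tc_{t,T}$ (Remark~\ref{R: why Ttilde}) and, by definition \eqref{V tilde}, $\tilde V(t,x,\mu)=\sup_{\tau\in\tilde\Tc_{t,T}}\E[R^\tau]$, enlarging the index set can only increase the supremum, whence $\tilde V(t,x,\mu)\le\sup_{\tau\in\Tc_{t,T}}\E[R^\tau]$. For ``$\ge$'', I would fix an arbitrary $\tau\in\Tc_{t,T}$ and use the displayed representation: since the essential supremum equals the constant $\tilde V(t,x,\mu)$ a.s., in particular $\E[R^\tau\,|\,\Fc_t]\le\tilde V(t,x,\mu)$ a.s.; taking expectations and using the tower property yields $\E[R^\tau]\le\tilde V(t,x,\mu)$, and taking the supremum over $\tau\in\Tc_{t,T}$ gives $\sup_{\tau\in\Tc_{t,T}}\E[R^\tau]\le\tilde V(t,x,\mu)$. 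Combining the two inequalities closes the proof.

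The point I expect to require the most care is conceptual rather than technical: recognising that the correct input is Theorem~\ref{T:representation V tilde} applied to a \emph{constant} $\eta$, so that the essential supremum collapses to a deterministic value. Once this is in place, no approximation of the essential supremum by a monotone (upward-directed) sequence is needed—which would be the more laborious alternative route—and the only remaining verifications, namely that each $R^\tau$ is integrable and that the conditional expectations are well defined, are immediate from Assumptions (\nameref{A1})–(\nameref{A2}) and the moment bound \eqref{new estimate MKV 1}.
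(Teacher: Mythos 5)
Your proposal is correct and follows essentially the same route as the paper's own proof: the ``$\le$'' direction from the inclusion $\tilde\Tc_{t,T}\subset\Tc_{t,T}$, and the ``$\ge$'' direction by applying Theorem~\ref{T:representation V tilde} with $\eta\equiv x$, using that $\tilde V(t,x,\mu)$ is deterministic to pass from the essential supremum of conditional expectations to ordinary expectations. No gaps.
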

\begin{proof}
    By definition of $\Tilde{V}$ and by the inclusion $\Tilde{\Tc}_{t,T}\subset\Tc_{t,T}$, we immediately obtain the inequality
    \[
        \tilde V(t,x,\mu) \ \leq \ \sup_{\tau\in\Tc_{t,T}}\E\bigg[\int_t^{\tau} f(s, X^{t,x,\mu}_s, \P^{t,\mu}_s)ds + g(X^{t,x,\mu}_{\tau})\bigg].
    \]
    It remains to prove the opposite inequality. Using Theorem \ref{T:representation V tilde} with $\eta\equiv x$, we have 
    \[
        \tilde V(t,x,\mu) \ = \ \esssup_{\tau\in\Tc_{t,T}}\E\bigg[\int_t^{\tau} f(s, X^{t,x,\mu}_s, \P^{t,\mu}_s)ds + g(X^{t,x,\mu}_{\tau})\bigg|\Fc_t\bigg].
    \]
    As a consequence, for each $\tau\in\Tc_{t,T}$, by definition of essential supremum,
    \[
        \tilde V(t,x,\mu) \ \geq \ \E\bigg[\int_t^{\tau} f(s, X^{t,x,\mu}_s, \P^{t,\mu}_s)ds + g(X^{t,x,\mu}_{\tau})\bigg|\Fc_t\bigg].
    \]
    Since the function $\Tilde{V}$ is deterministic, we get
    \[
        \tilde V(t,x,\mu) \ = \ \E[\Tilde{V}(t,x,\mu)] \ \geq \  \E\bigg[\int_t^{\tau} f(s, X^{t,x,\mu}_s, \P^{t,\mu}_s)ds + g(X^{t,x,\mu}_{\tau})\bigg].
    \]
    Hence, by the arbitrariness of $\tau$,
    \[
        \Tilde{V}(t,x,\mu) \ \geq \ \sup_{\tau\in\Tc_{t,T}}\E\bigg[\int_t^{\tau} f(s, X^{t,x,\mu}_s, \P^{t,\mu}_s)ds + g(X^{t,x,\mu}_{\tau})\bigg],
    \]
    and this concludes the proof.
\end{proof}
\noindent Another consequence of Theorem \ref{T:representation V tilde} is the following ``disintegration'' result, which shows the link between the original value function $V$ and the extended value function $\tilde V$.
\begin{Corollary}\label{C:relation V - V tilde}
    Let $t\in[0,T]$, $\xi\in L^2(\Fc_t;\R^d)$, and $\mu=\P_\xi\in\mathcal{P}_2(\R^d)$. Then
    \[
        V(t,\xi) \ = \ \E\big[\tilde V(t,\xi,\mu)\big] \ = \ \int_{\R^d}\tilde V(t,x,\mu)\mu(dx).
    \]
\end{Corollary}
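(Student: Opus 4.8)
The plan is to deduce this from Theorem \ref{T:representation V tilde} by taking expectations and exchanging $\E$ with the essential supremum. The second equality is routine: since $\xi$ has law $\mu$ and, by Proposition \ref{P: Vtilde properties}, the map $x\mapsto\tilde V(t,x,\mu)$ is continuous with sub-quadratic growth (hence $\mu$-integrable because $\xi\in L^2(\Fc_t;\R^d)$), the transfer/change-of-variables formula for the pushforward measure gives $\E[\tilde V(t,\xi,\mu)]=\int_{\R^d}\tilde V(t,x,\mu)\,\mu(dx)$. So the real content is the first equality $V(t,\xi)=\E[\tilde V(t,\xi,\mu)]$.

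First I would set $Y_\tau:=\int_t^\tau f(s,X^{t,\xi,\mu}_s,\P^{t,\mu}_s)\,ds+g(X^{t,\xi,\mu}_\tau)$ and $Z_\tau:=\E[Y_\tau|\Fc_t]$ for $\tau\in\Tc_{t,T}$. By Remark \ref{R: state processes}, $X^{t,\xi,\mu}$ and $X^{t,\xi}$ are indistinguishable and $\P^{t,\mu}_s=\P_{X^{t,\xi}_s}$, so $\E[Y_\tau]=J(t,\xi,\tau)$, and taking the supremum over $\tau\in\Tc_{t,T}$ gives $\sup_\tau\E[Y_\tau]=V(t,\xi)$. Theorem \ref{T:representation V tilde} reads $\tilde V(t,\xi,\mu)=\esssup_\tau Z_\tau$ a.s., so it remains to prove $\E[\esssup_\tau Z_\tau]=\sup_\tau\E[Y_\tau]$.

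The key step is to show that the family $\{Z_\tau\}_{\tau\in\Tc_{t,T}}$ is directed upward. Given $\tau_1,\tau_2\in\Tc_{t,T}$, put $A:=\{Z_{\tau_1}\geq Z_{\tau_2}\}\in\Fc_t$ (the $Z_\tau$ being $\Fc_t$-measurable) and $\tau_3:=\tau_1\mathbbm{1}_A+\tau_2\mathbbm{1}_{A^c}$; since $A\in\Fc_t\subset\Fc_s$ for all $s\geq t$, one checks $\{\tau_3\leq s\}\in\Fc_s$, so $\tau_3\in\Tc_{t,T}$. Because $A\in\Fc_t$, one has $Y_{\tau_3}=\mathbbm{1}_A Y_{\tau_1}+\mathbbm{1}_{A^c}Y_{\tau_2}$, and pulling $\mathbbm{1}_A$ out of the conditional expectation yields $Z_{\tau_3}=\mathbbm{1}_A Z_{\tau_1}+\mathbbm{1}_{A^c}Z_{\tau_2}=Z_{\tau_1}\vee Z_{\tau_2}$. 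Hence the family is directed upward, so there is a sequence $(\tau_n)_n\subset\Tc_{t,T}$ with $Z_{\tau_n}\uparrow\esssup_\tau Z_\tau$ a.s.

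Finally I would pass to the limit under the expectation. Using the same integrable dominating random variable constructed in Step 1.2 of the proof of Theorem \ref{T:representation V tilde} (built from Assumption (\nameref{A2}) together with the moment bound of Proposition \ref{T:ex-uniq MKV}), every $Z_\tau$ is integrable and the increasing sequence $Z_{\tau_n}$ is bounded below by the integrable $Z_{\tau_1}$. Monotone convergence applied to $Z_{\tau_n}-Z_{\tau_1}\geq 0$ (or dominated convergence) then gives $\E[\esssup_\tau Z_\tau]=\lim_n\E[Z_{\tau_n}]=\lim_n\E[Y_{\tau_n}]\leq\sup_\tau\E[Y_\tau]$, while the reverse inequality is immediate from $\esssup_\tau Z_\tau\geq Z_\tau$, whence $\E[\esssup_\tau Z_\tau]\geq\E[Y_\tau]$ for every $\tau$. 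Combining the two yields $\E[\tilde V(t,\xi,\mu)]=\sup_\tau\E[Y_\tau]=V(t,\xi)$. The one point that requires care is the directed-upward property and the resulting existence of an a.s. increasing maximizing sequence; once that is established, the interchange of limit and expectation is standard.
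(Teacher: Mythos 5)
Your proof is correct, and it reaches the conclusion by a mildly different route from the paper's. Both arguments rest entirely on Theorem \ref{T:representation V tilde}, and your ``easy'' direction coincides with the paper's Step 1: taking expectations in $\tilde V(t,\xi,\mu)\geq \E[Y_\tau\,|\,\Fc_t]$ and using the indistinguishability of $X^{t,\xi,\mu}$ and $X^{t,\xi}$ gives $\E[\tilde V(t,\xi,\mu)]\geq J(t,\xi,\tau)$ for every $\tau\in\Tc_{t,T}$, hence $\geq V(t,\xi)$. For the opposite inequality the paper selects, for each $\varepsilon>0$, a single stopping time $\tau_\varepsilon$ with $\tilde V(t,\xi,\mu)\leq \E[Y_{\tau_\varepsilon}\,|\,\Fc_t]+\varepsilon$ $\P$-a.s.\ and takes expectations, whereas you prove that the family $\{Z_\tau\}_{\tau\in\Tc_{t,T}}$ is directed upward (via $\tau_3=\tau_1\mathbbm{1}_A+\tau_2\mathbbm{1}_{A^c}$ with $A=\{Z_{\tau_1}\geq Z_{\tau_2}\}\in\Fc_t$, which is indeed a stopping time because $A\in\Fc_t\subset\Fc_s$) and then interchange expectation and essential supremum by monotone convergence along an increasing maximizing sequence. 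The two routes are equivalent in substance, but yours is the more self-contained: the existence of a pointwise a.s.\ $\varepsilon$-optimal $\tau_\varepsilon$, as invoked in the paper, is itself most cleanly justified through exactly the directedness you establish (an increasing sequence $Z_{\tau_n}\uparrow\esssup_\tau Z_\tau$ need not converge uniformly, so what one really extracts is $\varepsilon$-optimality in expectation, which is all either proof needs), so your argument fills in a detail the paper leaves implicit. Your handling of the second equality via the transfer formula, using the continuity and sub-quadratic growth of $\tilde V(t,\cdot,\mu)$ from Proposition \ref{P: Vtilde properties}, is also fine.
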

\begin{proof}
    We split the proof into two steps.

    \vspace{2mm}
    
    \noindent\textsc{Step} 1. \emph{Proof of the inequality} $V(t,\xi)$ $\leq$ $\E\big[\tilde V(t,\xi,\mu)\big]$. Using Theorem \ref{T:representation V tilde} with $\eta = \xi$, we have that, for any $\tau\in\Tc_{t,T}$,
    \[
        \tilde V(t,\xi,\mu) \ \geq \ \E\bigg[\int_{t}^{\tau} f(s,X^{t,\xi,\mu}_s,\P^{t,\mu}_s)ds \ + \ g(X^{t,\xi,\mu}_\tau) \bigg|\Fc_t\bigg].
    \]
    Thus,
    \[
        \E\big[\tilde V(t,\xi,\mu)\big] \ \geq \ \E\bigg[\int_{t}^{\tau} f(s,X^{t,\xi,\mu}_s,\P^{t,\mu}_s)ds \ + \ g(X^{t,\xi,\mu}_\tau)\bigg].
    \]
    Since $X^{t,\xi,\mu}$ and $X^{t,\xi}$ are indistinguishable processes (see Remark \ref{R: state processes}), it holds that
    \[
        \E\big[\tilde V(t,\xi,\mu)\big] \ \geq \ \E\bigg[\int_{t}^{\tau} f(s,X^{t,\xi}_s,\P^{t,\mu}_s)ds \ + \ g(X^{t,\xi}_\tau)\bigg] \ = \  J(t,\xi,\tau).
    \]
    Hence, by the arbitrariness of $\tau\in\Tc_{t,T}$, 
    \[
        \E\big[\tilde V(t,\xi,\mu)\big] \ \geq \ V(t,\xi).
    \]
    
    \noindent\textsc{Step} 2. \emph{Proof of the inequality} $V(t,\xi)$ $\geq$ $\E\big[\tilde V(t,\xi,\mu)\big]$. Using Theorem \ref{T:representation V tilde} with $\eta=\xi$ again, for any $\varepsilon>0$ there exists
    $\tau_\varepsilon\in\Tc_{t,T}$ such that
    \[
        \tilde V(t,\xi,\mu) \ \leq \ \E\bigg[\int_{t}^{\tau_\varepsilon} f(s,X^{t,\xi,\mu}_s,\P^{t,\mu}_s)ds \ + \ g(X^{t,\xi,\mu}_{\tau_\varepsilon}) \bigg|\Fc_t\bigg] \ + \ \varepsilon.
    \]
    This implies
     \[
        \E\big[\tilde V(t,\xi,\mu)\big] \ \leq \ \E\bigg[\int_{t}^{\tau_\varepsilon} f(s,X^{t,\xi,\mu}_s,\P^{t,\mu}_s)ds \ + \ g(X^{t,\xi,\mu}_{\tau_\varepsilon}) \bigg] \ + \ \varepsilon.
    \]
    Following the same reasoning as in \textsc{Step} 1, we get
    \[
        \E\big[\tilde V(t,\xi,\mu)\big] \ \leq \ J(t,\xi,\tau_\varepsilon) \ + \ \varepsilon.
    \]
    Hence
    \[
        \E\big[\tilde V(t,\xi,\mu)\big] \ \leq \ \sup_{\tau\in\Tc_{t,T}}J(t,\xi,\tau) \ + \ \varepsilon
    \]
    By definition of $V$ and the arbitrariness of $\varepsilon$, we conclude that $\E\big[\tilde V(t,\xi,\mu)\big]$ $\leq$ $V(t,\xi)$.
\end{proof}
\section{Snell envelope and optimal stopping time}\label{S:Snell}
Let $t\in[0,T],\,\xi\in L^2(\Fc_t;\R^d)$ be fixed and define as in \cite[Section 2, Chapter I]{PS} the gain process
\[
    G^{t,\xi}_s \ \coloneqq \ \int_{t}^{s}f\big(r,X^{t,\xi}_r,\P_{X^{t,\xi}_r}\big)dr \ + \ g\big(X^{t,\xi}_s\big),\qquad\text{for all }s\in[t,T].
\]
Then, by definition, we can rewrite the original value function $V$ as
\[
    V(t,\xi) \ = \  \sup_{\tau\in\Tc_{t,T}}\E\big[G^{t,\xi}_\tau\big].
\]
Notice that $\E[\sup_{t\leq s\leq T}|G^{t,\xi}_s|]<+\infty$, so that it is well-defined the process
\begin{equation}\label{snell envelope}
    \Sc^{t,\xi}_s \ \coloneqq \ \esssup_{\tau\in\Tc_{s,T}}\E\big[G^{t,\xi}_\tau\big|\Fc_s\big],\qquad\text{for all }s\in[t,T].
\end{equation}
According to \cite[Theorem 2.2]{PS}, $\Sc^{t,\xi}$ is the Snell envelope of $G^{t,\xi}$, while the smallest optimal stopping time $\hat{\tau}^{t,\xi}\in\Tc_{t,T}$ for the problem with initial data $t$ and $\xi$ is given by
\begin{equation}\label{optimal tau}
    \hat{\tau}^{t,\xi} \ \coloneqq \ \inf\big\{s\in[t,T]\ \big| \ \Sc^{t,\xi}_s = G^{t,\xi}_s\big\}.
\end{equation}
The Snell envelope and the optimal stopping time can be written in terms of the extended value function $\tilde V$ in \eqref{V tilde}, as stated in the following theorem.
\begin{Theorem}\label{T:Snell}
    Let $t\in[0,T],\,\xi\in L^2(\Fc_t;\R^d)$ and let $X^{t,\xi}$ be the corresponding state process. Then the process
    \[
        \bigg(\Tilde{V}(s,X^{t,\xi}_s, \P_{X^{t,\xi}_s}) \ + \ \int_{t}^{s} f(r,X^{t,\xi}_r,\P_{X^{t,\xi}_r})dr\bigg)_{s\in[t,T]}
    \]
    coincides with the Snell envelope $\Sc^{t,\xi}$. Moreover, the optimal stopping time $\hat{\tau}^{t,\xi}$ can be written as
    \[
        \hat{\tau}^{t,\xi} \ = \ \inf\big\{s\in[t,T] \ \big| \ \Tilde{V}(s,X^{t,\xi}_s,\P_{X^{t,\xi}_s}) = g(X^{t,\xi}_s)\big\}.
    \]
\end{Theorem}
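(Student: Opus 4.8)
The plan is to prove the representation of the Snell envelope $\Sc^{t,\xi}$ first, as an identity of processes (up to indistinguishability); the formula for $\hat\tau^{t,\xi}$ will then be an immediate consequence. Indeed, granting that
\[
\Sc^{t,\xi}_s \ = \ \tilde V\big(s,X^{t,\xi}_s,\P_{X^{t,\xi}_s}\big) \ + \ \int_t^s f\big(r,X^{t,\xi}_r,\P_{X^{t,\xi}_r}\big)dr \qquad\text{for all } s\in[t,T],\ \P\text{-a.s.},
\]
and recalling that $G^{t,\xi}_s = \int_t^s f(r,X^{t,\xi}_r,\P_{X^{t,\xi}_r})dr + g(X^{t,\xi}_s)$, the running-cost term cancels and the level set $\{s : \Sc^{t,\xi}_s = G^{t,\xi}_s\}$ coincides pathwise with $\{s : \tilde V(s,X^{t,\xi}_s,\P_{X^{t,\xi}_s}) = g(X^{t,\xi}_s)\}$; inserting this into the definition \eqref{optimal tau} of $\hat\tau^{t,\xi}$ gives the announced expression. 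Note that it is the process-level (not merely the fixed-time) identity that is needed here, so at the end I would also observe that the candidate process is continuous in $s$ (by continuity of $\tilde V$, of $X^{t,\xi}$, of $s\mapsto\P_{X^{t,\xi}_s}$ in $\Wc_2$, and of the integral), whence, being a modification of the c\`adl\`ag Snell envelope, the two are indistinguishable.

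To establish the identity at a fixed $s\in[t,T]$, I would split the gain at time $s$: for every $\tau\in\Tc_{s,T}$,
\[
G^{t,\xi}_\tau \ = \ \int_t^s f\big(r,X^{t,\xi}_r,\P_{X^{t,\xi}_r}\big)dr \ + \ \bigg(\int_s^\tau f\big(r,X^{t,\xi}_r,\P_{X^{t,\xi}_r}\big)dr + g\big(X^{t,\xi}_\tau\big)\bigg).
\]
The first summand is $\Fc_s$-measurable, hence factors out of $\E[\,\cdot\,|\Fc_s]$ and, being independent of $\tau$, also out of the essential supremum in \eqref{snell envelope}. This reduces the claim to
\[
\esssup_{\tau\in\Tc_{s,T}}\E\bigg[\int_s^\tau f\big(r,X^{t,\xi}_r,\P_{X^{t,\xi}_r}\big)dr + g\big(X^{t,\xi}_\tau\big)\,\Big|\,\Fc_s\bigg] \ = \ \tilde V\big(s,X^{t,\xi}_s,\P_{X^{t,\xi}_s}\big).
\]

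To identify the left-hand side with $\tilde V$, the crucial ingredient is the flow property of the state process on $[s,T]$. Writing $\mu_s := \P_{X^{t,\xi}_s}$ and regarding $X^{t,\xi}_s\in L^2(\Fc_s;\R^d)$ as an initial datum at time $s$, I would prove that
\[
X^{s,X^{t,\xi}_s,\mu_s}_r \ = \ X^{t,\xi}_r \quad\text{and}\quad \P^{s,\mu_s}_r \ = \ \P_{X^{t,\xi}_r}, \qquad r\in[s,T],\ \P\text{-a.s.}
\]
The marginal identity holds because the restriction of $X^{t,\xi}$ to $[s,T]$ is itself a solution of the McKean--Vlasov equation \eqref{MKV SDE} on $[s,T]$ with initial condition $X^{t,\xi}_s$; by the uniqueness in Proposition \ref{T:ex-uniq MKV} it coincides with the McKean--Vlasov solution restarted at $s$, and Remark \ref{R: same marginals} then yields $\P^{s,\mu_s}_r = \P_{X^{t,\xi}_r}$. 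Once the marginal flow is fixed, equation \eqref{new SDE} defining $X^{s,X^{t,\xi}_s,\mu_s}$ has exactly the (now frozen) coefficients of the equation satisfied by $X^{t,\xi}$ on $[s,T]$, so Remark \ref{R: state processes} together with uniqueness for standard stochastic differential equations gives the pathwise identity. Substituting the two identities into the integrand and applying Theorem \ref{T:representation V tilde} with initial time $s$, random initial condition $\eta = X^{t,\xi}_s$ and measure $\mu_s$ produces exactly $\tilde V(s,X^{t,\xi}_s,\mu_s)$, which completes the proof of the Snell envelope identity.

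The step I expect to be the most delicate is this flow property, specifically the verification that $X^{t,\xi}$ restarted at the random, $\Fc_s$-measurable and generally non-constant initial condition $X^{t,\xi}_s$ remains an admissible McKean--Vlasov solution without altering the marginal flow $(\P_{X^{t,\xi}_r})_{r\in[s,T]}$ entering the coefficients. This self-consistency is precisely what uniqueness delivers, but it is the point at which the argument must be made rigorous; everything else (the splitting of the gain, the factorisation of the $\Fc_s$-measurable term, and the appeal to Theorem \ref{T:representation V tilde}) is routine.
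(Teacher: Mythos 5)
Your proposal is correct and follows essentially the same route as the paper: split off the $\Fc_s$-measurable running cost, identify the remaining essential supremum with $\tilde V(s,X^{t,\xi}_s,\P_{X^{t,\xi}_s})$ by applying Theorem \ref{T:representation V tilde} at time $s$ with $\eta=X^{t,\xi}_s$, and justify the substitution via the flow property, which the paper isolates as Lemma \ref{L: flow prop} and proves by the same two uniqueness arguments (McKean--Vlasov and frozen-coefficient) that you sketch. Your closing remark on upgrading the fixed-time identity to indistinguishability through continuity of the candidate process is a point of care the paper leaves implicit, but it does not change the argument.
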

\noindent In order to prove Theorem \ref{T:Snell}, we need the so-called \emph{flow property} of the state process (see also Remark \ref{R: state processes}).
\begin{Lemma}[Flow property]\label{L: flow prop}
    Let $t\in[0,T]$, $x\in\R^d$, $\mu\in\mathscr{P}_2(\R^d)$ and let $(X^{t,x,\mu}_s,\P^{t,\mu}_s)_{s\in[t,T]}$ be the corresponding state process. Then, for every $t'\in[t,T]$,
    \begin{equation}\label{eq prop flusso}
        \big(X^{t,x,\mu}_s,\P^{t,\mu}_s\big)_{s\in[t',T]} \ = \ \big(X^{t',X^{t,x,\mu}_{t'}, \P^{t,\mu}_{t'}}_s,\P^{t',\P^{t,\mu}_{t'}}_s\big)_{s\in[t',T]}\quad\text{a.s.}
    \end{equation}
\end{Lemma}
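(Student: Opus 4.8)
The plan is to prove the identity \eqref{eq prop flusso} by splitting it into its two natural components: first the flow property for the marginal laws, namely $\P_s^{t,\mu}=\P_s^{t',\P_{t'}^{t,\mu}}$ for $s\in[t',T]$, and then, building on this, the pathwise flow property $X_s^{t,x,\mu}=X_s^{t',X_{t'}^{t,x,\mu},\P_{t'}^{t,\mu}}$. The unifying principle is that restricting a solution to the subinterval $[t',T]$ yields a solution of the same equation restarted at time $t'$, so that both identities reduce to the uniqueness statements of Proposition \ref{T:ex-uniq MKV} and the standard well-posedness of \eqref{new SDE}.

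First I would settle the measure component. Fix $\xi\in L^2(\Fc_t;\R^d)$ with $\P_\xi=\mu$ and let $X^{t,\xi}$ be the associated McKean--Vlasov solution, so that $\P_{X_s^{t,\xi}}=\P_s^{t,\mu}$ for all $s\in[t,T]$. Writing the integral form of \eqref{MKV SDE} from $t'$ instead of $t$, one checks that the restriction $(X_s^{t,\xi})_{s\in[t',T]}$ solves \eqref{MKV SDE} on $[t',T]$ with initial datum $X_{t'}^{t,\xi}\in L^2(\Fc_{t'};\R^d)$: indeed the coefficients are evaluated along $\P_{X_r^{t,\xi}}$, which is exactly the marginal law of the restricted process. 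By the uniqueness part of Proposition \ref{T:ex-uniq MKV}, this restriction is indistinguishable from $X^{t',X_{t'}^{t,\xi}}$. Passing to laws and recalling that $\P_{X_{t'}^{t,\xi}}=\P_{t'}^{t,\mu}$, Remark \ref{R: same marginals} then gives $\P_s^{t,\mu}=\P_{X_s^{t,\xi}}=\P_s^{t',\P_{t'}^{t,\mu}}$ for every $s\in[t',T]$.

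Next I would treat the pathwise component. Setting $\nu:=\P_{t'}^{t,\mu}$, the measure flow just proved shows that $b(r,\cdot,\P_r^{t',\nu})=b(r,\cdot,\P_r^{t,\mu})$ and similarly for $\sigma$ on $[t',T]$; hence the standard SDE defining the family $y\mapsto X^{t',y,\nu}$ has exactly the same deterministic coefficients as the equation satisfied by $(X_s^{t,x,\mu})_{s\in[t',T]}$ written from $t'$. Using the modification of $X^{t',\cdot,\nu}$ that is measurable in the initial state (Remark \ref{R: state processes}), together with the fact that $X_{t'}^{t,x,\mu}$ is $\Fc_{t'}$-measurable and therefore independent of the increments $(W_s-W_{t'})_{s\in[t',T]}$ that drive $X^{t',\cdot,\nu}$, the substitution lemma for standard SDEs with random initial condition yields that $X_s^{t',X_{t'}^{t,x,\mu},\nu}$ is the unique solution on $[t',T]$ with initial value $X_{t'}^{t,x,\mu}$. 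Since $(X_s^{t,x,\mu})_{s\in[t',T]}$ is another such solution, pathwise uniqueness for \eqref{new SDE} forces the two to agree almost surely, giving the second identity in \eqref{eq prop flusso}.

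The hard part will be this last substitution of the random initial datum $X_{t'}^{t,x,\mu}$ into the deterministically-indexed family $X^{t',\cdot,\nu}$. It genuinely requires both ingredients: the measurable-in-initial-state version furnished by Remark \ref{R: state processes}, and the independence between the $\Fc_{t'}$-measurable initial value and the future Brownian increments, so that evaluating the flow at the random point still produces a \emph{bona fide} solution of \eqref{new SDE} with that random initial condition. Once this is in place, the remaining identifications are routine consequences of uniqueness.
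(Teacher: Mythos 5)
Your proposal is correct and follows essentially the same two-step route as the paper's proof: first the identity of the measure flows via restriction of the McKean--Vlasov solution to $[t',T]$ and uniqueness (Proposition \ref{T:ex-uniq MKV} together with Remark \ref{R: same marginals}), then the pathwise identity by substituting the flows in the coefficients and invoking uniqueness for the standard SDE \eqref{new SDE} with the random initial datum handled through Remark \ref{R: state processes}. The only difference is that you spell out the substitution of the $\Fc_{t'}$-measurable initial condition more explicitly than the paper, which simply cites Remark \ref{R: state processes} at that point.
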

\begin{proof}We split the proof into two steps.

    \vspace{2mm}
    
    \noindent\textsc{Step} 1. \emph{Proof of the equality} $\P^{t,\mu}_s \ = \ \P^{t',\P^{t,\mu}_{t'}}_s$ for all $s\in[t',T]$. Let $\xi\in L^2(\Fc_t;\R^d)$ be such that $\P_\xi = \mu$ and let $X^{t,\xi}$ be the corresponding solution to \eqref{MKV SDE}. By Proposition \ref{P: same marginal} and Remark \ref{R: same marginals}, we know that the flow of marginals of $X^{t,\xi}$ is $(\P^{t,\mu}_s)_{s\in[t',T]}$. Let $s\in[t',T]$, then
    \begin{align*}
        X^{t,\xi}_s \ &= \ \xi + \int_t^s b(r,X^{t,\xi}_r,\P^{t,\mu}_r)dr + \int_t^s \sigma(r,X^{t,\xi}_r,\P^{t,\mu}_r)dW_r\\
        &= \ X^{t,\xi}_{t'} + \int_{t'}^{s} b(r,X^{t,\xi}_r,\P^{t,\mu}_r)dr + \int_{t'}^s \sigma(r,X^{t,\xi}_r,\P^{t,\mu}_r)dW_r.
    \end{align*}
    Thus, $(X_s^{t,\xi})_{s\in[t',T]}$ solves the stochastic differential equation whose unique solution (under Assumption (\nameref{A1})) is $X^{t',X^{t,\xi}_{t'}}$; it follows that the processes $X^{t,\xi}$ e $X^{t',X^{t,\xi}_{t'}}$ are indistinguishable on $[t',T]$. Then, by Proposition \ref{P: same marginal} and by the fact that $\P_{X^{t,\xi}_{t'}} = \P^{t,\mu}_{t'}$, it holds that
    \[
        \P^{t,\mu}_s \ = \ \P_{X^{t,\xi}_s} \ = \ \P_{X^{t',X^{t,\xi}_{t'}}_s} \ = \ \P^{t',\P^{t,\mu}_{t'}}_s,\qquad\text{for all} s\in[t',T].
    \]
    \textsc{Step} 2. \emph{Proof that $X^{t,x,\mu}$ and $X^{t',X^{t,x,\mu}_{t'},\P^{t,\mu}_{t'}}$ are indistinguishable processes on $[t',T]$}. Let $s\in[t',T]$. By definition, the process $X^{t,x,\mu}$ satisfies
    \begin{align*}
        X^{t,x,\mu}_s \ &= \ x \ + \ \int_t^s b(r,X^{t,x,\mu}_r, \P^{t,\mu}_r)dr \ + \ \int_t^s \sigma(r,X^{t,x,\mu}_r, \P^{t,\mu}_r)dW_r\\
        &= \ X^{t,x,\mu}_{t'} \ + \ \int_{t'}^s b(r,X^{t,x,\mu}_r, \P^{t,\mu}_r)dr \ + \ \int_{t'}^s \sigma(r,X^{t,x,\mu}_r,\P^{t,\mu}_r)dW_r.
    \end{align*}
    By \textsc{Step} 1 we have $\P^{t,\mu}_r = \P^{t',\P^{t,\mu}_{t'}}_r$, for every $r\in[t',T]$. Then, we can write
    \[
        X^{t,x,\mu}_s \ = \ \ X^{t,x,\mu}_{t'} \ + \ \int_{t'}^s b(r,X^{t,x,\mu}_r, \P^{t',\P^{t,\mu}_{t'}}_r)dr \ + \ \int_{t'}^s \sigma(r,X^{t,x,\mu}_r,\P^{t',\P^{t,\mu}_{t'}}_r)dW_r.
    \]
    We see that $(X_s^{t,x,\mu})_{s\in[t',T]}$ solves a classical stochastic differential equation, whose unique solution is $X^{t',X^{t,x,\mu}_{t'},\P^{t,\mu}_{t'}}$ (see Remark \ref{R: state processes}). Hence, $X^{t,x,\mu}$ and $X^{t',X^{t,x,\mu}_{t'},\P^{t,\mu}_{t'}}$ are indistinguishable on $[t',T]$.
\end{proof}
\begin{proof}[Proof of Theorem \ref{T:Snell}]
    By \eqref{snell envelope} we have
    \begin{align*}
         \Sc^{t,\xi}_s \ &= \ \esssup_{\tau\in\Tc_{s,T}}\E\big[G^{t,\xi}_\tau\big|\Fc_s\big] \ = \ \esssup_{\tau\in\Tc_{s,T}}\E\bigg[\int_{t}^{\tau}f(r,X^{t,\xi}_r,\P_{X^{t,\xi}_r})dr \ + \ g(X^{t,\xi}_\tau)\bigg|\Fc_s\bigg]\\
         &= \ \esssup_{\tau\in\Tc_{s,T}}\E\bigg[\int_{t}^{s}f(r,X^{t,\xi}_r,\P_{X^{t,\xi}_r})dr \ + \ \int_{s}^{\tau}f(r,X^{t,\xi}_r,\P_{X^{t,\xi}_r})dr \ + \ g(X^{t,\xi}_\tau)\bigg|\Fc_s\bigg].
    \end{align*}
    Since the first integral is $\Fc_s$-measurable and independent of $\tau$, we can write
    \[
        \Sc^{t,\xi}_s \ = \ \esssup_{\tau\in\Tc_{s,T}}\E\bigg[\int_{s}^{\tau}f(r,X^{t,\xi}_r,\P_{X^{t,\xi}_r})dr \ + \ g(X^{t,\xi}_\tau)\bigg|\Fc_s\bigg] \ + \ \int_{t}^{s}f(r,X^{t,\xi}_r,\P_{X^{t,\xi}_r})dr.
    \]
    Thus we only need to prove that
    \begin{equation}\label{Vtilde = esssup}
        \esssup_{\tau\in\Tc_{s,T}}\E\bigg[\int_{s}^{\tau}f(r,X^{t,\xi}_r,\P_{X^{t,\xi}_r})dr \ + \ g(X^{t,\xi}_\tau)\bigg|\Fc_s\bigg] \ = \ \Tilde{V}(s,X^{t,\xi}_s,\P_{X^{t,\xi}_s}).
    \end{equation}
    By Theorem \ref{T:representation V tilde}, we have
    \[
        \Tilde{V}(s,X^{t,\xi}_s,\P_{X^{t,\xi}_s}) \ = \ \esssup_{\tau\in\Tc_{s,T}}\E\bigg[\int_{s}^{\tau}f\Big(r,X^{s,X^{t,\xi}_s,\P_{X^{t,\xi}_s}}_r,\P^{s,\P_{X^{t,\xi}_s}}_r\Big)dr \ + \ g\Big(X^{s,X^{t,\xi}_s,\P_{X^{t,\xi}_s}}_\tau\Big)\bigg|\Fc_s\bigg].
    \]
    Now notice that, by the flow property (Lemma \ref{L: flow prop}), $\P$-a.s.,
        \[
            \Big(X^{s,X^{t,\xi}_s,\P_{X^{t,\xi}_s}}_r, \P^{s,\P_{X^{t,\xi}_s}}_r\Big)_{r\in[s,T]} \ = \ \Big(X^{t,\xi,\P_\xi}_r,\P^{t,\P_\xi}_r\Big)_{r\in[s,T]}.
        \]
    Moreover, by Remark \ref{R: state processes}, $X^{t,\xi,\P_\xi}=X^{t,\xi}$ a.s., and by Remark \ref{R: same marginals}, $\P^{t,\P_\xi}_r=\P_{X^{t,\xi}_r}$, for all $r\in[s,T]$. From these equalities we see that \eqref{Vtilde = esssup} holds. Finally, the characterization of the optimal stopping time $\hat\tau^{t,\xi}$ follows immediately from \eqref{optimal tau} and the expression of the Snell envelope in terms of $\tilde V$.
\end{proof}
\section{Dynamic programming principle}\label{S:DPP}

As already mentioned, on the enlarged state space we recover a time-consistency property, in fact we are able to prove a dynamic programming principle for the extended value function $\tilde V$ in \eqref{V tilde}.

\begin{Theorem}[Dynamic Programming Principle]\label{T:DPP MKV 1}
    Let $t\in[0,T]$, $x\in\R^d$, $\mu\in\mathcal{P}_2(\R^d)$. Then, for every $t'\in[t,T]$,
    \begin{align}
        \tilde V(t,x,\mu) \ = \ \sup_{\tau\in\Tc_{t,T}} \E\bigg[\int_{t}^{\tau\land t'} f(s,X^{t,x,\mu}_s,&\P^{t,\mu}_s)ds \ + \ g(X^{t,x,\mu}_\tau)\mathbbm{1}_{\{\tau<t'\}}\notag \\
        &+ \ \tilde V(t',X^{t,x,\mu}_{t'},\P^{t,\mu}_{t'})\mathbbm{1}_{\{\tau\geq t'\}}\bigg]\label{DPP MKV}.
    \end{align}
\end{Theorem}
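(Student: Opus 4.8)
The plan is to establish both inequalities between $\tilde V(t,x,\mu)$ and the right-hand side, which I denote by $\Phi(t,x,\mu)$. Throughout I abbreviate $X_s:=X^{t,x,\mu}_s$ and $\P_s:=\P^{t,\mu}_s$, and I first invoke Corollary \ref{C: enlarge stopping times for Vtilde} so that $\tilde V(t,x,\mu)=\sup_{\tau\in\Tc_{t,T}}\E[\int_t^\tau f(s,X_s,\P_s)\,ds+g(X_\tau)]$; that is, I may work with the full family $\Tc_{t,T}$ on both sides. The backbone of the argument is the pathwise splitting of the reward at the deterministic time $t'$,
\[
\int_t^\tau f(s,X_s,\P_s)\,ds+g(X_\tau)=\int_t^{\tau\land t'}\! f(s,X_s,\P_s)\,ds+g(X_\tau)\mathbbm{1}_{\{\tau<t'\}}+\mathbbm{1}_{\{\tau\geq t'\}}\Big(\int_{t'}^{\tau\vee t'}\! f(s,X_s,\P_s)\,ds+g(X_{\tau\vee t'})\Big),
\]
valid for every $\tau\in\Tc_{t,T}$, which one verifies separately on $\{\tau<t'\}$ and $\{\tau\geq t'\}$. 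I would also record at the outset that, combining Theorem \ref{T:representation V tilde} applied at initial time $t'$ with $\eta=X_{t'}\in L^2(\Fc_{t'};\R^d)$ and the flow property (Lemma \ref{L: flow prop}), which identifies $X^{t',X_{t'},\P_{t'}}_s=X_s$ and $\P^{t',\P_{t'}}_s=\P_s$ for $s\in[t',T]$, one obtains the key representation
\[
\tilde V(t',X_{t'},\P_{t'}) \ = \ \esssup_{\rho\in\Tc_{t',T}} Z_\rho, \qquad Z_\rho:=\E\Big[\int_{t'}^\rho f(s,X_s,\P_s)\,ds+g(X_\rho)\,\Big|\,\Fc_{t'}\Big],\qquad\P\text{-a.s.}
\]

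For the inequality $\tilde V(t,x,\mu)\leq\Phi(t,x,\mu)$, I fix $\tau\in\Tc_{t,T}$ and set $\tau':=\tau\vee t'\in\Tc_{t',T}$. Taking expectations in the splitting identity and conditioning the last term on $\Fc_{t'}$ (note $\{\tau\geq t'\}\in\Fc_{t'}$), that term equals $\E[\mathbbm{1}_{\{\tau\geq t'\}}Z_{\tau'}]$, which by the representation above is bounded by $\E[\mathbbm{1}_{\{\tau\geq t'\}}\tilde V(t',X_{t'},\P_{t'})]$. Hence $\tilde J(t,x,\mu,\tau)\leq\E[\int_t^{\tau\land t'} f\,ds+g(X_\tau)\mathbbm{1}_{\{\tau<t'\}}+\tilde V(t',X_{t'},\P_{t'})\mathbbm{1}_{\{\tau\geq t'\}}]\leq\Phi(t,x,\mu)$, and taking the supremum over $\tau$ gives this half.

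The reverse inequality $\tilde V(t,x,\mu)\geq\Phi(t,x,\mu)$ is the delicate one, and it is where I expect the main work. The obstacle is that $\tilde V(t',X_{t'},\P_{t'})$ is only an essential supremum, so one must realize it, up to a vanishing error, by genuinely admissible stopping rules. The standard remedy is to exploit that the family $\{Z_\rho:\rho\in\Tc_{t',T}\}$ is upward directed: given $\rho_1,\rho_2$, the event $A=\{Z_{\rho_1}\geq Z_{\rho_2}\}$ lies in $\Fc_{t'}$, whence $\rho_3=\rho_1\mathbbm{1}_A+\rho_2\mathbbm{1}_{A^c}\in\Tc_{t',T}$ and $Z_{\rho_3}=Z_{\rho_1}\vee Z_{\rho_2}$. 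Consequently there is a sequence $(\tau_n)\subset\Tc_{t',T}$ with $Z_{\tau_n}\uparrow\tilde V(t',X_{t'},\P_{t'})$ a.s. For fixed $\tau\in\Tc_{t,T}$ I then paste $\hat\tau_n:=\tau\mathbbm{1}_{\{\tau<t'\}}+\tau_n\mathbbm{1}_{\{\tau\geq t'\}}$ and check that $\hat\tau_n\in\Tc_{t,T}$ (verifying $\{\hat\tau_n\leq s\}\in\Fc_s$ for $s<t'$ and for $s\geq t'$, using $\{\tau<t'\},\{\tau\geq t'\}\in\Fc_{t'}$). Applying Corollary \ref{C: enlarge stopping times for Vtilde} to $\hat\tau_n$ and using the splitting identity together with the tower property yields
\[
\tilde V(t,x,\mu)\ \geq\ \E\Big[\int_t^{\tau\land t'} f(s,X_s,\P_s)\,ds+g(X_\tau)\mathbbm{1}_{\{\tau<t'\}}\Big]+\E\big[\mathbbm{1}_{\{\tau\geq t'\}}Z_{\tau_n}\big].
\]
Letting $n\to\infty$, the sub-quadratic growth of $f$ and $g$ from Assumption (\nameref{A2}) together with the moment bound \eqref{new estimate MKV 1} furnish an integrable dominating function, so (monotone or dominated) convergence gives $\E[\mathbbm{1}_{\{\tau\geq t'\}}Z_{\tau_n}]\to\E[\mathbbm{1}_{\{\tau\geq t'\}}\tilde V(t',X_{t'},\P_{t'})]$; taking the supremum over $\tau\in\Tc_{t,T}$ then yields $\tilde V(t,x,\mu)\geq\Phi(t,x,\mu)$. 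Apart from this directedness/selection step, the remaining points, namely admissibility of the pasted stopping times, measurability of the pasting events in $\Fc_{t'}$, and the uniform integrability for the limit passage, are routine consequences of the structure of $\F$ and Assumptions (\nameref{A1})--(\nameref{A2}).
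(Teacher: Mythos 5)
Your proposal is correct and follows essentially the same route as the paper: both directions rest on splitting the reward at $t'$, identifying $\tilde V(t',X^{t,x,\mu}_{t'},\P^{t,\mu}_{t'})$ with the conditional essential supremum via Theorem \ref{T:representation V tilde} and the flow property (Lemma \ref{L: flow prop}), and pasting stopping times across $\{\tau<t'\}$ and $\{\tau\geq t'\}$. The only divergence is in how the essential supremum is realized for the inequality $\tilde V\geq\Phi$: the paper selects a single $\varepsilon$-optimal $\tilde\tau_\varepsilon\in\Tc_{t',T}$, whereas you extract a monotone approximating sequence from the upward-directedness of the family $\{Z_\rho\}$ and pass to the limit, which is a legitimate (and arguably more carefully justified) variant of that same step.
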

\begin{proof}
    For simplicity of notation, we define
    \[
        \Lambda(t,x,\mu) \coloneqq \sup_{\tau\in \Tc_{t,T}} \E\bigg[\int_{t}^{\tau\land t'} f(s,X^{t,x,\mu}_s,\P^{t,\mu}_s)ds + g(X^{t,x,\mu}_\tau)\mathbbm{1}_{\{\tau<t'\}} + \tilde V(t',X^{t,x,\mu}_{t'},\P^{t,\mu}_{t'})\mathbbm{1}_{\{\tau\geq t'\}}\bigg]. 
    \]
    We split the rest of the proof into two steps.

    \vspace{2mm}
    
    \noindent\textsc{Step} 1. \emph{Proof of the inequality} $\tilde V(t,x,\mu) \leq \Lambda(t,x,\mu)$. Let $\tau\in\Tc_{t,T}$ be fixed. By definition of $\Lambda$
    \[
        \Lambda(t,x,\mu) \ \geq \ \E\bigg[\int_{t}^{\tau\land t'} f(s,X^{t,x,\mu}_s,\P^{t,\mu}_s)ds + \ g(X^{t,x,\mu}_\tau)\mathbbm{1}_{\{\tau<t'\}} + \tilde V(t',X^{t,x,\mu}_{t'},\P^{t,\mu}_{t'})\mathbbm{1}_{\{\tau\geq t'\}}\bigg].
    \]
    On the other hand, by Theorem \ref{T:representation V tilde}, using that $\tau\vee t'\in\Tc_{t',T}$,
    \begin{align*}
        \tilde V(t',X^{t,x,\mu}_{t'},\P^{t,\mu}_{t'}) \ \geq\ \E\bigg[\int_{t'}^{\tau\vee t'} f(s,X^{t',X^{t,x,\mu}_{t'},\P^{t,\mu}_{t'}}_s,\P^{t',\P^{t,\mu}_{t'}}_s)ds\ + \ g(X^{t',X^{t,x,\mu}_{t'},\P^{t,\mu}_{t'}}_{\tau\vee t'})\bigg|\Fc_{t'}\bigg].
    \end{align*}
    By the flow property \eqref{eq prop flusso}, we have $(X^{t,x,\mu}_s,\P^{t,\mu}_s)_{s\in[t',T]}=(X^{t',X^{t,x,\mu}_{t'}, \P^{t,\mu}_{t'}}_s,\P^{t',\P^{t,\mu}_{t'}}_s)_{s\in[t',T]}$ a.s., so that
    \begin{align*}
        \tilde V(t',X^{t,x,\mu}_{t'},\P^{t,\mu}_{t'}) \ \geq\ \E\bigg[\int_{t'}^{\tau\vee t'} f(s,X^{t,x,\mu}_s,\P^{t,\mu}_s)ds \ + \ g(X^{t,x,\mu}_{\tau\vee t'})\bigg|\Fc_{t'}\bigg].
    \end{align*}
    Going back to the first inequality, we have (using that $\{\tau\geq t'\}$ is $\Fc_{t'}$-measurable)
    \begin{align}
        \Lambda(t,x,\mu) \ &\geq\ \E\bigg[\int_{t}^{\tau\land t'} 
        f(s,X^{t,x,\mu}_s,\P^{t,\mu}_s)ds \ + \ g(X^{t,x,\mu}_\tau)\mathbbm{1}_{\{\tau<t'\}} \notag \\
        &\hspace{2cm}+ \ \E\bigg[\int_{t'}^{\tau\vee t'} f(s,X^{t,x,\mu}_s,\P^{t,\mu}_s)ds \ + \ g(X^{t,x,\mu}_{\tau\vee t'})\bigg|\Fc_{t'}\bigg]\mathbbm{1}_{\{\tau\geq t'\}}\bigg] \notag \\
        &= \ \E\bigg[\int_{t}^{\tau\land t'} 
        f(s,X^{t,x,\mu}_s,\P^{t,\mu}_s)ds\ + \ \mathbbm{1}_{\{\tau\geq t'\}}\int_{t'}^{\tau\vee t'} f(s,X^{t,x,\mu}_s,\P^{t,\mu}_s)ds\notag\\
        &\hspace{5 cm}+ \ g(X^{t,x,\mu}_{\tau})\mathbbm{1}_{\{\tau<t'\}} \ + \ g(X^{t,x,\mu}_{\tau\vee t'})\mathbbm{1}_{\{\tau\geq t'\}}\bigg] \notag\\
        &= \ \E\bigg[\int_{t}^{\tau}f(s,X^{t,x,\mu}_s,\P^{t,\mu}_s)ds\ + \ g(X^{t,x,\mu}_\tau)\bigg] \ = \ \tilde J(t,x,\mu,\tau)\notag .
    \end{align}
    Thus, by the arbitrariness of $\tau\in\Tc_{t,T}$, we conclude that $\Lambda(t,x,\mu)\geq\tilde V(t,x,\mu)$.

    \vspace{2mm}
    
    \noindent\textsc{Step} 2. \emph{Proof of the inequality} $\tilde V(t,x,\mu) \geq \Lambda(t,x,\mu)$. Let $\varepsilon>0$, then there exists $ \tau_\varepsilon\in\Tc_{t,T}$ such that
    \[
        \Lambda(t,x,\mu) \ \leq\ \E\bigg[\int_{t}^{\tau_\varepsilon\land t'} f(s,X^{t,x,\mu}_s,\P^{t,\mu}_s)ds + g(X^{t,x,\mu}_{\tau_\varepsilon})\mathbbm{1}_{\{\tau_\varepsilon<t'\}} + \tilde V(t',X^{t,x,\mu}_{t'},\P^{t,\mu}_{t'})\mathbbm{1}_{\{\tau_\varepsilon\geq t'\}}\bigg] + \varepsilon.
    \]
    On the other hand, using the same $\varepsilon$, by Theorem \ref{T:representation V tilde} there exists $\tilde\tau_\varepsilon\in\Tc_{t',T}$ such that
    \[
        \tilde V(t',X^{t,x,\mu}_{t'},\P^{t,\mu}_{t'}) \ \leq \ \E\bigg[\int_{t'}^{\tilde\tau_\varepsilon} f(s,X^{t',X^{t,x,\mu}_{t'},\P^{t,\mu}_{t'}}_s,\P^{t',\P^{t,\mu}_{t'}}_s)ds + g(X^{t',X^{t,x,\mu}_{t'},\P^{t,\mu}_{t'}}_{\tilde\tau_\varepsilon})\bigg|\Fc_{t'}\bigg] + \varepsilon.
    \]
    Following the same reasoning as in \textsc{Step} 1, by the flow property we have that
    \[
        V(t',X^{t,x,\mu}_{t'},\P^{t,\mu}_{t'})\ \leq\ \E\bigg[\int_{t'}^{\tilde\tau_\varepsilon} f(s,X^{t,x,\mu}_s,\P^{t,\mu}_s)ds + g(X^{t,x,\mu}_{\tilde\tau_\varepsilon})\bigg|\Fc_{t'}\bigg] + \varepsilon.
    \]
    This implies that
    \begin{align*}
        \Lambda(t,x,\mu) \ &\leq\ \E\bigg[\int_{t}^{\tau_\varepsilon\land t'} f(s,X^{t,x,\mu}_s,\P^{t,\mu}_s)ds \ + \ g(X^{t,x,\mu}_{\tau_\varepsilon})\mathbbm{1}_{\{\tau_\varepsilon<t'\}}\\
        &\hspace{1.5cm} + \ \E\bigg[\int_{t'}^{\tilde\tau_\varepsilon} f(s,X^{t,x,\mu}_s,\P^{t,\mu}_s)ds \ + \ g(X^{t,x,\mu}_{\tilde\tau_\varepsilon})\bigg|\Fc_{t'}\bigg]\mathbbm{1}_{\{\tau_\varepsilon\geq t'\}}\bigg] \ + \ 2\varepsilon\\
        &= \ \E\bigg[\int_{t}^{\tau_\varepsilon\land t'} f(s,X^{t,x,\mu}_s,\P^{t,\mu}_s)ds \ + \ g(X^{t,x,\mu}_{\tau_\varepsilon})\mathbbm{1}_{\{\tau_\varepsilon<t'\}}\\
        &\hspace{1.5cm} + \ \bigg(\int_{t'}^{\tilde\tau_\varepsilon} f(s,X^{t,x,\mu}_s,\P^{t,\mu}_s)ds \ + \ g(X^{t,x,\mu}_{\tilde\tau_\varepsilon})\bigg)\mathbbm{1}_{\{\tau_\varepsilon\geq t'\}}\bigg]\ + \ 2\varepsilon.
    \end{align*}
    Now, let $\hat\tau_\varepsilon \coloneqq \tau_\varepsilon\mathbbm{1}_{\{\tau_\varepsilon<t'\}}+\tilde\tau_\varepsilon\mathbbm{1}_{\{\tau_\varepsilon\geq t'\}}$. Notice that $\hat\tau_\varepsilon\in\Tc_{t,T}$. As a matter of fact, we begin noting that $\hat\tau_\varepsilon$ takes values in $[t,T]$, since $\hat\tau_\varepsilon$ coincides with $\tau_\varepsilon$ or with $\tilde\tau_\varepsilon$ and they take values  in $[t,T]$ and $[t',T]$ respectively. Moreover, for every $s\in[t,T]$ it holds that
    \[
        \{\hat\tau_\varepsilon\leq s\} \ = \ \{\hat\tau_\varepsilon\leq s,\tau_\varepsilon< t'\}\cup\{\hat\tau_\varepsilon\leq s,\tau_\varepsilon\geq t'\} \ = \ \{\tau_\varepsilon\leq s,\tau_\varepsilon< t'\}\cup\{\tilde\tau_\varepsilon\leq s,\tau_\varepsilon\geq t'\}.
    \]
    Since $\tilde\tau_\varepsilon\geq t'$ a.s., if $s\in[t,t')$ it holds that
    \[
        \{\hat\tau_\varepsilon\leq s\} = \{\tau_\varepsilon\leq s\}\in\Fc_s.
    \]
    On the other hand, if $s\in[t',T]$, then
    \[
        \{\hat\tau_\varepsilon\leq s\} = \{\tau_\varepsilon< t'\}\cup(\{\tilde\tau_\varepsilon\leq s\}\cap\{\tau_\varepsilon\geq t'\})\in\Fc_s,
    \]
    where we used the following measurability properties:
    \begin{itemize}
        \item[-] $\{\tau_\varepsilon< t'\},\{\tau_\varepsilon\geq t'\}\in\Fc_{t'}\subset\Fc_s$ since $\F$ is right continuous and $\tau_\varepsilon\in\tilde \Tc_{t,T}$;
        \item[-] $\{\tilde\tau_\varepsilon\leq s\}\in\Fc_s$ since $\tilde\tau_\varepsilon\in \Tc_{t',T}$. 
    \end{itemize}
    Thus, we can conclude that $\hat\tau_\varepsilon$ is a stopping time, in particular $\hat\tau_\varepsilon\in\Tc_{t,T}$. Then, we can write
    \begin{align*}
        \Lambda(t,x,\mu) \ &\leq \ \E\bigg[\int_{t}^{\hat\tau_\varepsilon\land t'} f(s,X^{t,x,\mu}_s,\P^{t,\mu}_s)ds \ + \ g(X^{t,x,\mu}_{\hat\tau_\varepsilon})\mathbbm{1}_{\{\hat\tau_\varepsilon<t'\}}\\
        &\hspace{1.5cm} + \ \mathbbm{1}_{\{\hat\tau_\varepsilon \geq t'\}}\bigg(\int_{t'}^{\hat\tau_\varepsilon} f(s,X^{t,x,\mu}_s,\P^{t,\mu}_s)ds \ + \ g(X^{t,x,\mu}_{\hat\tau_\varepsilon})\bigg)\bigg]\ + \ 2\varepsilon.
    \end{align*}
    The latter inequality results from the following equalities:
    \begin{itemize}
        \item[-] $\hat\tau_\varepsilon\land t' \ = \
            \begin{cases}
                \tau_\varepsilon\land t', &\text{on }
                \{\tau_\varepsilon<t'\}\\
                \tilde\tau_\varepsilon\land t', &\text{on }\{\tau_\varepsilon\geq t'\}
            \end{cases}
            \ = \ \begin{cases}
                \tau_\varepsilon, &\text{on }
                \{\tau_\varepsilon<t'\}\\
                t', &\text{on }\{\tau_\varepsilon\geq t'\}
            \end{cases}
            \ = \ \tau_\varepsilon\land t';$
        \item[-] $\{\hat\tau_\varepsilon<t'\}=\{\hat\tau_\varepsilon<t',\tau_\varepsilon<t'\}\cup\{\hat\tau_\varepsilon<t',\tau_\varepsilon\geq t'\}=\{\tau_\varepsilon<t',\tau_\varepsilon<t'\}\cup\{\tilde\tau_\varepsilon<t',\tau_\varepsilon\geq t'\}$ $=\{\tau_\varepsilon<t'\};$
        \item[-] $\{\hat\tau_\varepsilon\geq t'\}=\{\hat\tau_\varepsilon \geq t',\tau_\varepsilon<t'\}\cup\{\hat\tau_\varepsilon\geq t',\tau_\varepsilon\geq t'\}=\{\tau_\varepsilon\geq t',\tau_\varepsilon<t'\}\cup\{\tilde\tau_\varepsilon\geq t',\tau_\varepsilon\geq t'\}$ $=\{\tau_\varepsilon\geq t'\}.$
    \end{itemize}
    As a consequence, we obtain
    \[
        \Lambda(t,x,\mu) \ \leq \ \tilde J(t,x,\mu,\hat\tau_\varepsilon) \ + \ 2\varepsilon \ \leq \ \tilde V(t,x,\mu) \ + \ 2\varepsilon.
    \]
    Finally, by the arbitrariness of $\varepsilon$, we conclude that $\Lambda(t,x,\mu)\leq\tilde V(t,x,\mu)$.
\end{proof}

\section{Hamilton-Jacobi-Bellman equation}\label{S:HJB}

In the present section we prove that the extended value function $\tilde V$ in \eqref{V tilde} solves a suitable Hamilton-Jacobi-Bellman equation in the viscosity sense. Such an equation is derived relying on the dynamic programming principle (Theorem \ref{T:DPP MKV 1}) and it turns out to be the following variational inequality on $[0,T]\times\R^d\times\mathcal{P}_2(\R^d)$:
\begin{equation}\label{HJB MKV}
    \begin{cases}
    \vspace{2mm}\max\big\{\partial_t u + \Lc_t u + f, g - u\big\}=0, &\qquad \text{on }[0,T)\times\R^d\times\mathcal{P}_2(\R^d), \\
    u(T,x,\mu) = g(x), &\qquad (x,\mu)\in\R^d\times\mathcal{P}_2(\R^d),
\end{cases}
\end{equation}
where $\Lc_t$ is the second-order differential operator associated to $(X^{t,x,\mu},\P^{t,\mu})$ given by
\begin{align*}
    \Lc_t u(t,x,\mu) &= \langle b(t,x,\mu),\partial_x u(t,x,\mu)\rangle + \frac{1}{2}\text{tr}(\sigma\sigma^T (t,x,\mu)\partial^2_x u(t,x,\mu))\\
    &\quad + \int_{\R^d} \langle b(t,y,\mu), \partial_\mu u(t,x,\mu,y)\rangle\mu(dy) + \frac{1}{2}\int_{\R^d} \text{tr}(\sigma\sigma^T(t,y,\mu)\partial_x\partial_\mu u(t,x,\mu,y))\mu(dy).
\end{align*}
The derivatives with respect to the measure $\partial_\mu u$ and $\partial_x\partial_\mu u$ are the so-called $L$-derivatives or Lions derivatives, as defined for instance in \cite[Chapter 5]{CD18_I}. In particular, in the sequel we denote by $\Cc^{1,2,2}([0,T]\times\R^d\times\mathcal{P}_2(\R^d))$ the set of functions $\varphi\colon[0,T]\times\R^d\times\mathcal{P}_2(\R^d)\longrightarrow\R$ satisfying assumptions (A1)-(A2)-(A3) and (5.106) in \cite[Proposition 5.102]{CD18_I}, so that It\^o's formula (\cite[formula (5.107)]{CD18_I}) holds.

\begin{Definition}
    A continuous function $u\colon[0,T]\times\R^d\times\mathcal{P}_2(\R^d)\longrightarrow\R$ is a viscosity subsolution (resp. supersolution) to \eqref{HJB MKV} if:
    \begin{itemize}
        \item $u(T,x,\mu)\leq \,(\text{resp.}\geq)\ g(x)$, for every $(x,\mu)\in\R^d\times\P_2(\R^d)$;
        \item for every $(t,x,\mu)\in[0,T)\times\R^d\times\mathcal{P}_2(\R^d)$ and $\varphi\in\Cc^{1,2,2}([0,T]\times\R^d\times\mathcal{P}_2(\R^d))$ such that $u-\varphi$ has a global maximum (resp. minimum) at $(t,x,\mu)$ with value $0$, it holds that 
        \[
            \max\big\{\partial_t \varphi+ \Lc_t \varphi + f, g - \varphi\big\}\geq \,(\text{resp.} \leq)\ 0.
        \]
    \end{itemize}
    The function $u$ is a viscosity solution to \eqref{HJB MKV} if it is at the same time a viscosity subsolution and a viscosity supersolution.
\end{Definition}

\begin{Theorem}\label{T: V tilde solution HJB}
    Suppose that, in addition to \textnormal{Assumptions (\nameref{A1})} and \textnormal{(\nameref{A2})}, $b,\sigma, f$ are continuous on $[0,T]\times\R^d\times\mathcal{P}_2(\R^d)$. Then, the extended value function $\tilde V$ is a viscosity solution to the Hamilton-Jacobi-Bellman equation \eqref{HJB MKV}.
\end{Theorem}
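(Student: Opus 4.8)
The plan is to verify the three defining properties of a viscosity solution: the terminal condition, the supersolution property, and the subsolution property, using the dynamic programming principle (Theorem \ref{T:DPP MKV 1}) together with Itô's formula on the Wasserstein space (\cite[formula (5.107)]{CD18_I}). Two ingredients are available for free. First, the terminal condition holds with equality: since $\tilde\Tc_{T,T}=\{T\}$ and the running integral is empty, $\tilde V(T,x,\mu)=g(x)$, so $\tilde V(T,\cdot)\le g$ and $\tilde V(T,\cdot)\ge g$ simultaneously. Second, taking $\tau\equiv t$ in the definition \eqref{V tilde} gives $\tilde V(t,x,\mu)\ge g(x)$ on all of $[0,T]\times\R^d\times\Pc_2(\R^d)$. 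Throughout, I will use that for a test function $\varphi\in\Cc^{1,2,2}$ and the state process $(X^{t,x,\mu}_s,\P^{t,\mu}_s)_{s\in[t,T]}$ one has, after taking expectations (the local-martingale part vanishing), $\E[\varphi(t',X^{t,x,\mu}_{t'},\P^{t,\mu}_{t'})]=\varphi(t,x,\mu)+\E[\int_t^{t'}(\partial_s\varphi+\Lc_s\varphi)(s,X^{t,x,\mu}_s,\P^{t,\mu}_s)\,ds]$; here the deterministic continuity equation satisfied by the marginal flow $\P^{t,\mu}_s$ is exactly what produces the two $\partial_\mu$, $\partial_x\partial_\mu$ terms of $\Lc_s$.

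For the \emph{supersolution} property, fix $(t,x,\mu)\in[0,T)\times\R^d\times\Pc_2(\R^d)$ and $\varphi\in\Cc^{1,2,2}$ with $\tilde V-\varphi$ having a global minimum $0$ at $(t,x,\mu)$, so $\varphi\le\tilde V$ everywhere and $\varphi(t,x,\mu)=\tilde V(t,x,\mu)$. The inequality $g-\varphi\le 0$ at $(t,x,\mu)$ is immediate from $\tilde V\ge g$. For the other branch, I apply Theorem \ref{T:DPP MKV 1} with the particular choice $\tau=t'\coloneqq t+h$, which gives $\tilde V(t,x,\mu)\ge\E[\int_t^{t'}f(s,X^{t,x,\mu}_s,\P^{t,\mu}_s)\,ds+\tilde V(t',X^{t,x,\mu}_{t'},\P^{t,\mu}_{t'})]$. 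Using $\varphi\le\tilde V$ on the terminal term and then the Itô identity above to expand $\E[\varphi(t',X^{t,x,\mu}_{t'},\P^{t,\mu}_{t'})]$, the $\varphi(t,x,\mu)$ terms cancel and I obtain $0\ge\E[\int_t^{t'}(\partial_s\varphi+\Lc_s\varphi+f)(s,X^{t,x,\mu}_s,\P^{t,\mu}_s)\,ds]$. Dividing by $h$ and letting $h\downarrow 0$, the continuity of $b,\sigma,f$ (the extra hypothesis of the theorem), the continuity of $s\mapsto(X^{t,x,\mu}_s,\P^{t,\mu}_s)$, and the moment bound \eqref{new estimate MKV 1} give, by dominated convergence, $(\partial_t\varphi+\Lc_t\varphi+f)(t,x,\mu)\le 0$. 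Hence $\max\{\partial_t\varphi+\Lc_t\varphi+f,\,g-\varphi\}\le 0$.

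For the \emph{subsolution} property, let $\tilde V-\varphi$ have a global maximum $0$ at $(t,x,\mu)$, so $\varphi\ge\tilde V\ge g$ everywhere and $\varphi(t,x,\mu)=\tilde V(t,x,\mu)$. If $\tilde V(t,x,\mu)=g(x)$ then $g-\varphi=0$ there and we are done; so assume $\tilde V(t,x,\mu)>g(x)$ and argue by contradiction that $(\partial_t\varphi+\Lc_t\varphi+f)(t,x,\mu)<0$. I fix a radius $r$ and $h_0$ so that, on $[t,t+h_0]\times \bar B_r(x)\times\{\nu:\Wc_2(\nu,\mu)\le r\}$, one has $\partial_s\varphi+\Lc_s\varphi+f\le -c<0$ and $\varphi-g\ge\beta>0$, which is possible by continuity. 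For $h<h_0$, Theorem \ref{T:DPP MKV 1} (with $t'=t+h$) furnishes a near-optimal $\tau_h\in\Tc_{t,T}$ with $\tilde V(t,x,\mu)\le \Phi_h(\tau_h)+h^2$, where $\Phi_h(\tau)$ is the right-hand side of \eqref{DPP MKV}. Bounding $g\le\varphi$ on $\{\tau_h<t'\}$ and $\tilde V\le\varphi$ on $\{\tau_h\ge t'\}$ yields $\Phi_h(\tau_h)\le\E[\int_t^{\sigma}f\,ds+\varphi(\sigma,X^{t,x,\mu}_\sigma,\P^{t,\mu}_\sigma)]$ with $\sigma\coloneqq\tau_h\wedge t'$; expanding the last term by the Itô identity gives $0\le\E[\int_t^\sigma(\partial_s\varphi+\Lc_s\varphi+f)\,ds]+h^2$. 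The remaining, genuinely delicate point is to show the right-hand side is strictly negative for small $h$: using $\tilde V(t,x,\mu)-g(x)=2\beta>0$ together with the continuity of $\tilde V$ and $g$ along the trajectories, near-optimality forces $\P(\tau_h\ge t')\to 1$ (otherwise stopping early would cost a fixed amount $\beta$), so $\E[\sigma-t]\sim h$; and an exit-time estimate—controlling $\P(\sup_{s\in[t,t+h]}|X^{t,x,\mu}_s-x|\ge r)=O(h)$ via \eqref{new estimate MKV 1}, while the $\mu$-component stays within $r$ automatically because $s\mapsto\P^{t,\mu}_s$ is deterministic and continuous—shows the time spent outside the good neighborhood is negligible. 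Combining these, $\E[\int_t^\sigma(\partial_s\varphi+\Lc_s\varphi+f)\,ds]\le -\tfrac{c}{2}h$ for $h$ small, contradicting the inequality $\ge -h^2$. Thus $\partial_t\varphi+\Lc_t\varphi+f\ge 0$ at $(t,x,\mu)$, completing the subsolution property.

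The main obstacle is the localization in the subsolution step on the product space $[0,T]\times\R^d\times\Pc_2(\R^d)$: controlling the generator $\Lc_s\varphi$ away from the base point and quantifying the exit behaviour of $(s,X^{t,x,\mu}_s,\P^{t,\mu}_s)$ from a neighborhood. Here the structure of our \emph{uncontrolled} problem is a crucial simplification: because the marginal flow $\P^{t,\mu}_s$ is a fixed deterministic curve (Remark \ref{R: same marginals}), localization in the measure variable reduces to continuity in time, and the only stochastic exit estimate needed is the standard one for $X^{t,x,\mu}$, available from \eqref{new estimate MKV 1}. The second point requiring care is the rigorous use of Itô's formula \cite[formula (5.107)]{CD18_I} for $\varphi\in\Cc^{1,2,2}$ composed with $(X^{t,x,\mu}_s,\P^{t,\mu}_s)$, and the passage to the limit $h\downarrow 0$ in the supersolution step, both of which rely on the newly assumed joint continuity of $b,\sigma,f$ to guarantee continuity of $(s,y,\nu)\mapsto(\partial_s\varphi+\Lc_s\varphi+f)(s,y,\nu)$.
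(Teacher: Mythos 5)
Your proposal is correct and follows essentially the same route as the paper: terminal condition by definition, supersolution via the DPP with $\tau=t$ and $\tau=t+h$ plus It\^o's formula, and subsolution via an $h^2$-optimal stopping time $\tau_h$ from the DPP combined with an exit-time estimate (moment bounds plus Markov/H\"older) showing that the event $\{\tau_h<t+h\}$ is asymptotically negligible when $g(x)-\varphi(t,x,\mu)<0$. The only difference is organizational: in the subsolution step you localize to a neighborhood where $\partial_t\varphi+\Lc_t\varphi+f\le -c$ and run a proof by contradiction, whereas the paper divides the same inequality by $h$, passes to the limit term by term, and bounds the limsup of the early-stopping term by the identical exit-time estimate; both versions rest on the same ingredients and either can be completed to a full proof.
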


\begin{proof} 
    We recall from Proposition \ref{P: Vtilde properties} that the function $\tilde V$ is continuous on $[0,T]\times\R^d\times\mathcal{P}_2(\R^d)$. Moreover, by definition $\Tilde{V}$ satisfies $\tilde V(T,x,\mu)=g(x)$, for every $(x,\mu)\in\R^d\times\mathcal{P}_2(\R^d)$. Then, it remains to prove the viscosity sub and supersolution properties on $[0,T)\times\R^d\times\mathcal{P}_2(\R^d)$.

    \vspace{2mm}
    
    \noindent\emph{Viscosity supersolution property.} Let $(t,x,\mu)\in[0,T)\times\R^d\times\mathcal{P}_2(\R^d)$ and $\varphi\in\Cc^{1,2,2}([0,T]\times\R^d\times\mathcal{P}_2(\R^d))$ be such that $\Tilde{V}-\varphi$ has a global minimum in $(t,x,\mu)$ with value $0$. Now, let $t'\coloneqq t+h$, with $h\in(0,T-t)$. By the dynamic programming principle \eqref{DPP MKV},
    \begin{align*}
        \tilde V(t,x,\mu) \ \geq \ \E\bigg[\int_{t}^{\tau\land (t+h)} f(s,X^{t,x,\mu}_s&,\P^{t,\mu}_s)ds \ + \ g(X^{t,x,\mu}_\tau)\mathbbm{1}_{\{\tau<t+h\}} + \\
        &+ \ \tilde V(t+h,X^{t,x,\mu}_{t+h},\P^{t,\mu}_{t+h})\mathbbm{1}_{\{\tau\geq t+h\}}\bigg],\quad\text{for all }\tau\in\Tc_{t,T}.
    \end{align*}
    Since $\tilde V(t,x,\mu)=\varphi(t,x,\mu)$ and $\tilde V\geq\varphi$, we get
    \begin{align*}
        \varphi(t,x,\mu) \ \geq \ \E\bigg[\int_{t}^{\tau\land (t+h)} f(s,X^{t,x,\mu}_s&,\P^{t,\mu}_s)ds \ + \ g(X^{t,x,\mu}_\tau)\mathbbm{1}_{\{\tau<t+h\}}\\
        &+ \ \varphi(t+h,X^{t,x,\mu}_{t+h},\P^{t,\mu}_{t+h})\mathbbm{1}_{\{\tau\geq t+h\}}\bigg],\quad\text{for all }\tau\in\Tc_{t,T}.
    \end{align*}
    By applying It\^o's formula (\cite[formula (5.107)]{CD18_I}) to $\varphi$ on the interval $[t,t+h]$, we obtain
    \begin{align*}
        &\varphi(t+h,X^{t,x,\mu}_{t+h},\P^{t,\mu}_{t+h})\\
        &= \varphi(t,x,\mu) + \int_t^{t+h} \partial_t \varphi(s,X^{t,x,\mu}_s,\P^{t,\mu}_s)ds + \int_t^{t+h} \langle b(s,X^{t,x,\mu}_s,\P^{t,\mu}_s),\partial_x \varphi(s,X^{t,x,\mu}_s,\P^{t,\mu}_s)\rangle ds \\
        &\quad + \frac{1}{2}\int_t^{t+h} \textup{tr}\big(\sigma\sigma^T(s,X^{t,x,\mu}_s,\P^{t,\mu}_s)\partial^2_x \varphi(s,X^{t,x,\mu}_s,\P^{t,\mu}_s)\big)ds\\
        &\quad + \int_t^{t+h}\bigg(\int_{\R^d} \langle b(s,y,\P^{t,\mu}_s),\partial_\mu \varphi(s,X^{t,x,\mu}_s,\P^{t,\mu}_s,y)\rangle\mu(dy)\bigg)ds\\
        &\quad + \frac{1}{2}\int_t^{t+h}\bigg(\int_{\R^d}\textup{tr}\big(\sigma\sigma^T(s,y,\P^{t,\mu}_s)\partial_y\partial_\mu \varphi(s,X^{t,x,\mu}_s,\P^{t,\mu}_s,y)\big)\mu(dy)\bigg)ds\\
        &\quad + \int_t^{t+h} \langle \partial_x \varphi(s,X^{t,x,\mu}_s,\P^{t,\mu}_s),\sigma(s,X^{t,x,\mu}_s,\P^{t,\mu}_s)dW_s\rangle\\
        &= \varphi(t,x,\mu) + \!\! \int_t^{t+h} \!\!\!\!\!\!\big(\partial_t\varphi + \Lc_t\varphi\big)(s,X^{t,x,\mu}_s,\P^{t,\mu}_s)ds + \!\! \int_t^{t+h} \!\!\!\!\!\!\langle \partial_x \varphi(s,X^{t,x,\mu}_s,\P^{t,\mu}_s),\sigma(s,X^{t,x,\mu}_s,\P^{t,\mu}_s)dW_s\rangle.
    \end{align*}
    Under our assumptions the stochastic integral is a martingale. Then, if we replace $\varphi(t+h,X^{t,x,\mu}_{t+h},\P^{t,\mu}_{t+h})$ in the previous inequality, we get that, for all $\tau\in\Tc_{t,T}$,
    \begin{align*}
        \varphi(t,x,\mu) \ \geq \ \E\bigg[&\int_{t}^{\tau\land (t+h)} f(s,X^{t,x,\mu}_s,\P^{t,\mu}_s)ds \ + \ g(X^{t,x,\mu}_\tau)\mathbbm{1}_{\{\tau<t+h\}}\\
        &+ \ \bigg(\varphi(t,x,\mu)\ + \ \int_t^{t+h} \big(\partial_t\tilde V + \Lc_t\varphi\big)(s,X^{t,x,\mu}_s,\P^{t,\mu}_s)ds\bigg)\mathbbm{1}_{\{\tau\geq t+h\}}\bigg].
    \end{align*}
    In particular, if we choose $\tau=t$, the inequality becomes
    \begin{equation}\label{step 1.1}
        \varphi(t,x,\mu) \ \geq \ \E[g(X^{t,x,\mu}_t)] \ = \ g(x).
    \end{equation}
    On the other hand, if we choose $\tau=t+h$, then
    \[
        \varphi(t,x,\mu) \ \geq \ \E\bigg[\int_t^{t+h}f(s,X^{t,x,\mu}_s,\P^{t,\mu}_s)ds + \varphi(t,x,\mu)+ \int_t^{t+h}\big(\partial_t\varphi+\Lc_t\varphi)(s,X^{t,x,\mu}_s,\P^{t,\mu}_s)ds\bigg],
    \]
    which can be written as
    \[
        0\ \geq \ \E\bigg[\int_t^{t+h}\big(\partial_t\varphi+\Lc_t\varphi + f\big)(s,X^{t,x,\mu}_s,\P^{t,\mu}_s)ds\bigg].
    \]
    Dividing the latter by $h$ and letting $h\to 0^+$, we find
    \begin{equation}\label{step 1.2}
        0\ \geq \ \big(\partial_t\varphi+\Lc_t\varphi + f\big)(t,x,\mu).
    \end{equation}
    Thus, since \eqref{step 1.1} and \eqref{step 1.2} hold simultaneously,
    \begin{equation*}
        \max\big\{(\partial_t\varphi + \Lc_t\varphi + f)(t,x,\mu), g(x) - \varphi(t,x,\mu)\big\} \ \leq \ 0.
    \end{equation*}
    \emph{Viscosity subsolution property.} Let $(t,x,\mu)\in[0,T)\times\R^d\times\mathcal{P}_2(\R^d)$  and $\varphi\in\Cc^{1,2,2}([0,T]\times\R^d\times\mathcal{P}_2(\R^d))$ be such that $\Tilde{V}-\varphi$ has a global maximum in $(t,x,\mu)$ with value $0$. To prove that
    \[
        \max\big\{(\partial_t \varphi + \Lc_t \varphi + f)(t,x,\mu), g(x) - \varphi(t,x,\mu)\big\} \ \geq \ 0,
    \]
    it suffices to show that at least one of the two terms is nonnegative. If $g(x) - \varphi(t,x,\mu)\geq 0$ the claim follows. Then we assume that $g(x) - \varphi(t,x,\mu)< 0$. Let $t'\coloneqq t+h$ with $h\in(0,T-t)$. By the dynamic programming principle \eqref{DPP MKV} there exists $\tau_h\in\Tc_{t,T}$ such that
    \begin{align*}
        \tilde V(t,x,\mu) \ \leq \ \E\bigg[\int_{t}^{\tau_h\land (t+h)} f(s,X^{t,x,\mu}_s,&\P^{t,\mu}_s)ds \ + \ g(X^{t,x,\mu}_{\tau_h})\mathbbm{1}_{\{{\tau_h}<t+h\}}\\
        &+ \ \tilde V(t+h,X^{t,x,\mu}_{t+h},\P^{t,\mu}_{t+h})\mathbbm{1}_{\{{\tau_h}\geq t+h\}}\bigg] \ + \ h^2.
    \end{align*}
    Since $\tilde V(t,x,\mu)=\varphi(t,x,\mu)$ and $\tilde V\leq\varphi$, we get
    \begin{align*}
        \varphi(t,x,\mu) \ \leq \ \E\bigg[\int_{t}^{\tau_h\land (t+h)} f(s,X^{t,x,\mu}_s,&\P^{t,\mu}_s)ds \ + \ g(X^{t,x,\mu}_{\tau_h})\mathbbm{1}_{\{{\tau_h}<t+h\}}\\
        &+ \ \varphi(t+h,X^{t,x,\mu}_{t+h},\P^{t,\mu}_{t+h})\mathbbm{1}_{\{{\tau_h}\geq t+h\}}\bigg] \ + \ h^2.
    \end{align*}
    By applying It\^o's formula (\cite[formula (5.107)]{CD18_I}) to $\varphi$ as in the proof of the viscosity supersolution property, we obtain
    \begin{align*}
        \varphi(t,x,\mu) \ &\leq \ \E\bigg[\int_{t}^{\tau_h\land (t+h)} f(s,X^{t,x,\mu}_s,\P^{t,\mu}_s)ds \ + \ g(X^{t,x,\mu}_{\tau_h})\mathbbm{1}_{\{{\tau_h}<t+h\}}\\
        &\quad\ + \ \bigg(\varphi(t,x,\mu) + \int_t^{t+h}\big(\partial_t\varphi + \Lc_t\varphi\big)(s,X^{t,x,\mu}_s,\P^{t,\mu}_s)ds\bigg)\mathbbm{1}_{\{{\tau_h}\geq t+h\}}\bigg] \ + \ h^2.
    \end{align*}
    This can be written as
    \begin{align}
        0 \ &\leq \ \E\bigg[\int_{t}^{\tau_h\land (t+h)} f(s,X^{t,x,\mu}_s,\P^{t,\mu}_s)ds \ + \  \mathbbm{1}_{\{{\tau_h}<t+h\}}\big(g(X^{t,x,\mu}_{\tau_h})-\varphi(t,x,\mu)\big)\notag\\
        &\hspace{4cm} + \ \mathbbm{1}_{\{{\tau_h}\geq t+h\}}\int_t^{t+h}\big(\partial_t\varphi + \Lc_t\varphi\big)(s,X^{t,x,\mu}_s,\P^{t,\mu}_s)ds\bigg] \ + \ h^2\notag\\
        &= \ \E\bigg[\mathbbm{1}_{\{{\tau_h}<t+h\}}\int_{t}^{\tau_h} f(s,X^{t,x,\mu}_s,\P^{t,\mu}_s)ds \ + \ \mathbbm{1}_{\{{\tau_h}<t+h\}}\big(g(X^{t,x,\mu}_{\tau_h})-\varphi(t,x,\mu)\big)\label{dpp+ito}\\
        &\hspace{3.3cm} + \ \mathbbm{1}_{\{{\tau_h}\geq t+h\}}\int_t^{t+h}\big(\partial_t\varphi + \Lc_t\varphi + f\big)(s,X^{t,x,\mu}_s,\P^{t,\mu}_s)ds\bigg] \ + \ h^2.\notag
    \end{align}
    Notice that
    \[
        \mathbbm{1}_{\{{\tau_h}\geq t+h\}}\int_t^{t+h}\big(\partial_t\varphi + \Lc_t\varphi + f\big)(s,X^{t,x,\mu}_s,\P^{t,\mu}_s)ds \ \longrightarrow \ 0\qquad\text{a.s.}\qquad\text{as }h\to0^+.
    \]
    Similarly
    \[
        \mathbbm{1}_{\{{\tau_h}<t+h\}}\int_{t}^{\tau_h} f(s,X^{t,x,\mu}_s,\P^{t,\mu}_s)ds \ \longrightarrow0 \ \qquad\text{a.s.}\qquad\text{as }h\to0^+.
    \] 
    Then, applying Fatou's lemma in \eqref{dpp+ito}, we have
    \[
        0 \ \leq \ \liminf_{h\to 0^+}\E\big[\mathbbm{1}_{\{{\tau_h}<t+h\}}\big(g(X^{t,x,\mu}_{\tau_h})-\varphi(t,x,\mu)\big)\big].
    \]
    Moreover, it holds that
    \begin{equation}\label{limsup}
        \limsup_{h\to0^+} \ \E\big[\mathbbm{1}_{\{{\tau_h}<t+h\}}\big(g(X^{t,x,\mu}_{\tau_h})-\varphi(t,x,\mu)\big)\big]\ \leq \ 0.
    \end{equation}
    Before proving \eqref{limsup}, we observe that, since $g$ is continuous and $g(x)-\varphi(t,x,\mu)<0$ by our assumption, there exists $R>0$ such that
    \[
        g(y) - \varphi(t,x,\mu) <0\qquad\text{for every }y\in\R^d\text{ with }|x-y|< R.
    \]
    Then, defining the stopping time 
    \[
        \theta \ \coloneqq \ \inf\{s\in[t,T]:|X^{t,x,\mu}_s -x|\geq R\},
    \]
    it holds that
    \begin{equation}\label{g-phi < 0}
        g(X^{t,x,\mu}_s) - \varphi(t,x,\mu)<0\qquad\text{ on }\{s<\theta\}.
    \end{equation}
    Hence (in the sequel we denote by $C$ a non-negative constant, which may change from line to line, independent of $t,x,\mu,h$, only depending on $g$, $b$, $\sigma$, $T$; moreover, $p',q',p\geq1$ and $1/p'+1/q'=1$)
    \begin{align*}
        &\E\Big[\mathbbm{1}_{\{{\tau_h}<t+h\}}\big(g(X^{t,x,\mu}_{\tau_h})-\varphi(t,x,\mu)\big)\Big] \ \leq \ \E\Big[\mathbbm{1}_{\{{\tau_h}<t+h\}}\big(g(X^{t,x,\mu}_{\tau_h})-\varphi(t,x,\mu)\big)^+\Big]\\
        &\!\!\!\underset{\eqref{g-phi < 0}}{=}   \E\Big[\mathbbm{1}_{\{\theta\leq{\tau_h}<t+h\}}\big(g(X^{t,x,\mu}_{\tau_h})-\varphi(t,x,\mu)\big)^+\Big] \ \leq \ \E\Big[\mathbbm{1}_{\{\theta<t+h\}}\big|g(X^{t,x,\mu}_{\tau_h})-\varphi(t,x,\mu)\big|\Big]\\
        &\leq \ C\,\E\Big[\mathbbm{1}_{\{\theta<t+h\}}\big(1 + \sup_{s\in[t,T]}|X^{t,x,\mu}_s|^2+|\varphi(t,x,\mu)|\big)\Big]\\
        &\!\!\!\!\!\underset{\text{H\"older}}{\leq}  C\,\E\Big[\big(1 + \sup_{s\in[t,T]}|X^{t,x,\mu}_s|^2+|\varphi(t,x,\mu)|\big)^{q'}\Big]^\frac{1}{q'}\,\E\Big[\mathbbm{1}_{\{\theta<t+h\}}^{p'}\Big]^\frac{1}{p'}\\
        &\leq \ C\bigg(1 + \E\bigg[\sup_{s\in[t,T]}|X^{t,x,\mu}_s|^{2q'}\bigg]+|\varphi(t,x,\mu)|^{q'}\bigg)^\frac{1}{q'}\P(\theta< t+h)^\frac{1}{p'}\\
        &\!\!\!\underset{\eqref{new estimate MKV 1}}{\leq}  C\big(1 + |x|^{2q'}+\Norm{\mu}_2^{2q'}+|\varphi(t,x,\mu)|^{q'}\big)^\frac{1}{q'}\P(\theta< t+h)^\frac{1}{p'}\\
        &\leq \ C\big(1 + |x|^2+\Norm{\mu}_2^2+|\varphi(t,x,\mu)|\big)\,\P(\theta< t+h)^\frac{1}{p'} \\
        &= \ C\big(1 + |x|^2+\Norm{\mu}_2^2+|\varphi(t,x,\mu)|\big)\,\P\bigg(\sup_{s\in[t,t+h]}|X^{t,x,\mu}_s-x|\geq R\bigg)^\frac{1}{p'}\\
        &\!\!\!\!\!\underset{\text{Markov}}{\leq} C\big(1 + |x|^2+\Norm{\mu}_2^2+|\varphi(t,x,\mu)|\big)\, \frac{\E\bigg[\sup_{s\in[t,t+h]}|X^{t,x,\mu}_s-x|^p\bigg]^\frac{1}{p'}}{R^\frac{p}{p'}}\\
        &\leq \ C\big(1 + |x|^2+\Norm{\mu}_2^2+|\varphi(t,x,\mu)|\big)\,h^\frac{p}{2p'}\,\big(1 + |x|^{p}+\Norm{\mu}_2^{p}\big)^\frac{1}{p'},
    \end{align*}
    where the last inequality follows from the standard estimate $\E\big[\sup_{s\in[t,t+h]}|X_s^{t,x,\mu} - x\big|^p]\leq Ch^\frac{p}{2}(1 + |x|^p + \Norm{\mu}_2^p)$. Notice that the estimates above imply \eqref{limsup}. Thus, we have
    \begin{equation}\label{lim}
        \lim_{h\to0^+}\E\big[\mathbbm{1}_{\{{\tau_h}<t+h\}}\big(g(X^{t,x,\mu}_{\tau_h})-\varphi(t,x,\mu)\big)\big]\ = \ 0.
    \end{equation}
    This, in turn, implies
    \begin{equation}\label{conv l1 indicatrice}
        \lim_{h\to0^+}\E\big[\mathbbm{1}_{\{{\tau_h}<t+h\}}\big] = 0.
    \end{equation}
    Indeed,
    \begin{align*}
        \E\big[\mathbbm{1}_{\{{\tau_h}<t+h\}}\big]\big(g(x)-\varphi(t,x,\mu)\big)\ &= \ \E\big[\mathbbm{1}_{\{{\tau_h}<t+h\}}\big(g(X^{t,x,\mu}_{\tau_h})-\varphi(t,x,\mu)\big)\big]\\
        &\quad \ - \ \E\big[\mathbbm{1}_{\{{\tau_h}<t+h\}}\big(g(X^{t,x,\mu}_{\tau_h}) - g(x)\big)\big],
    \end{align*}
    where the first term on the right tends to 0 by \eqref{lim}, while the second term tends to 0 by the continuity of $g$, the fact that $X^{t,x,\mu}$ has continuous trajectories, and by Lebesgue's dominated convergence theorem (using the sub-quadratic growth of $g$). Then, since $g(x)-\varphi(t,x,\mu) < 0$, necessarily \eqref{conv l1 indicatrice} holds.\\ 
    So, up to consider a subsequence,
    \[
        \mathbbm{1}_{\{{\tau_h}<t+h\}} \ \longrightarrow \ 0\qquad\text{a.s.}\qquad\text{as }h\to0^+
    \]
    and then
    \[
        \mathbbm{1}_{\{{\tau_h}\geq t+h\}} \ \longrightarrow \ 1\qquad\text{a.s.}\qquad\text{as }h\to0^+.
    \]
    Now, going back to \eqref{dpp+ito} and dividing by $h$, we get
    \begin{align}
        0 \ &\leq \ \E\bigg[\mathbbm{1}_{\{{\tau_h}<t+h\}}\frac{1}{h}\int_{t}^{\tau_h} f(s,X^{t,x,\mu}_s,\P^{t,\mu}_s)ds \ + \ \mathbbm{1}_{\{{\tau_h}<t+h\}}\frac{1}{h}\big(g(X^{t,x,\mu}_{\tau_h})-\varphi(t,x,\mu)\big)\label{dpp+ito 2}\\
        &\hspace{4 cm} + \ \mathbbm{1}_{\{{\tau_h}\geq t+h\}}\frac{1}{h}\int_t^{t+h}\big(\partial_t\varphi + \Lc_t\varphi + f\big)(s,X^{t,x,\mu}_s,\P^{t,\mu}_s)ds\bigg] \ + \ h\notag.
    \end{align}
    Thanks to the convergences proved above and to the mean value theorem, it holds that
    \[
        \mathbbm{1}_{\{{\tau_h}\geq t+h\}}\frac{1}{h}\int_t^{t+h}\big(\partial_t\varphi + \Lc_t\varphi + f\big)(s,X^{t,x,\mu}_s,\P^{t,\mu}_s)ds \ \overset{h\to0^+}{\longrightarrow} \ \big(\partial_t\varphi + \Lc_t\varphi + f\big)(t,x,\mu)\quad\text{a.s.}
    \]
    On the other hand, by the convergence of the indicator function and the continuity of $f$, $(X_s^{t,x,\mu})_s$, $(\P^{t,\mu}_s)_s$, we obtain
    \begin{align*}
        \bigg|\mathbbm{1}_{\{{\tau_h}<t+h\}}\frac{1}{h}\int_{t}^{\tau_h} f(s,X^{t,x,\mu}_s,\P^{t,\mu}_s)ds\bigg|\ &\leq \ \mathbbm{1}_{\{{\tau_h}<t+h\}}\frac{1}{h}\int_{t}^{\tau_h} \big|f(s,X^{t,x,\mu}_s,\P^{t,\mu}_s)\big|ds\\
        &\leq \ \mathbbm{1}_{\{{\tau_h}<t+h\}}\max_{s\in[t,T]}|\bar{f_s}|\frac{\tau_h-t}{h}\\
        &\leq \ \mathbbm{1}_{\{{\tau_h}<t+h\}}\max_{s\in[t,T]}|\bar{f_s}|\longrightarrow0\qquad\text{a.s.}\qquad\text{as }h\to0^+,
    \end{align*}
    where $\bar{f_s}:=f(s,X^{t,x,\mu}_s,\P^{t,\mu}_s)$. In particular, the last inequality is due to the fact that $\tau_h-t<h$ on $\{{\tau_h}<t+h\}$. Therefore, by passing to the limit, \eqref{dpp+ito 2} takes the form
    \[
        0 \ \leq \ \limsup_{h\to0^+}\E\bigg[\mathbbm{1}_{\{{\tau_h}<t+h\}}\frac{1}{h}\big( g(X^{t,x,\mu}_{\tau_h})-\varphi(t,x,\mu)\big)\bigg]\ + \ \big(\partial_t\varphi + \Lc_t\varphi + f\big)(t,x,\mu).
    \]
    Finally, define 
    \[
        \ell \ := \ \limsup_{h\to0^+} \ \E\bigg[\mathbbm{1}_{\{{\tau_h}<t+h\}}\frac{1}{h}\big( g(X^{t,x,\mu}_{\tau_h})-\varphi(t,x,\mu)\big)\bigg]
    \]
    With a similar argument to the one used to prove \eqref{limsup}, we get that $\ell\leq0$. 
    Hence we can conclude that
    \[
    \big(\partial_t\varphi + \Lc_t\varphi + f\big)(t,x,\mu)\ \geq \ - \ell \ \geq \ 0.
    \]
\end{proof}


\small
\bibliographystyle{plain}
\bibliography{bibliografia.bib}

\end{document}